 \newtheorem{thm}{Theorem}[section]
 \theoremstyle{definition}
 \newtheorem{defn}[thm]{Definition}
 \theoremstyle{remark}
 \newtheorem{rem}[thm]{Remark}
 \numberwithin{equation}{section}
\newcommand\refthm[1]{\ref{thm:#1}}
\newcommand\eqn[1]{(\ref{eq:#1})}
\newcommand\sect[1]{\ref{sec:#1}}
\newcommand{\sgn}{\textnormal{sgn}}
\newcommand{\beqs}{\begin{equation*}}
\newcommand{\eeqs}{\end{equation*}}
\newcommand{\beq}{\begin{equation}}
\newcommand{\eeq}{\end{equation}}
\newcommand{\qbas}[5]{{}_2\phi_1\left(\begin{matrix} 
    #1, &#2; &#3, &#4 \\ 
        &#5  &    & 
\end{matrix}\right)}
\newcommand{\Parans}[1]{\left(#1\right)}
\newcommand{\aqprod}[3]{\Parans{#1;#2}_{#3}}
\newcommand{\abs}[1]{\lvert#1\rvert}
\newcommand\leg[2]{\genfrac{(}{)}{}{}{#1}{#2}} 
\begin{document}

%
%
%
%
%
%
%
%
%

\title[Mock theta function identities]
{New fifth and seventh order mock theta \\
function identities}

\author{F. G. Garvan}
\address{Department of Mathematics, University of Florida, Gainesville,
FL 32611-8105}
\email{fgarvan@ufl.edu}
\thanks{The author was supported in part by a grant from 
the Simon's Foundation (\#318714).}
\subjclass[2010]{Primary 33D15; Secondary 11B65, 11F27}             

\keywords{Mock theta functions, Hecke-Rogers double sums, Bailey pairs, conjugate
Bailey pairs}

\date{October 9, 2018}
\dedicatory{Dedicated to George Andrews on the occasion of his eightieth birthday}

\begin{abstract}
We give simple proofs of Hecke-Rogers indefinite binary theta series identities 
for the two Ramanujan
fifth order mock theta functions $\chi_0(q)$ and $\chi_1(q)$ and
all three of Ramanujan's seventh order mock theta functions.
We find that the coefficients of the three mock theta functions of order
$7$ are surprisingly related.
\end{abstract}

\maketitle
\section{Introduction}
\label{sec:intro}

In his last letter to G.~H.~Hardy, Ramanujan described new functions that
he called mock theta functions and listed mock theta functions of order
$3$, $5$ and $7$. Watson studied the behaviour of the third order
functions under the modular group, but was unable to find similar transformation
properties for the fifth and seventh order functions. The first substantial
progress towards finding such transformation properties was made by       
Andrews \cite{An86},
who found double sum representations for the fifth and seventh order
functions. These double sum representations were reminiscent of certain
identities for modular forms found by Hecke and Rogers. Andrews results
for the fifth and seventh order mock theta functions were crucial to 
Zwegers \cite{Zw-thesis},
who later showed how to complete these functions to harmonic Maass forms. 
For more details on this aspect see Zagier's survey \cite{Zag06}.

Throughout this paper we use following standard notation:
$$
(a;q)_\infty = (a)_\infty = \prod_{n=1}^\infty (1-aq^n),
$$
\begin{align*}
(a;q)_n &= (a)_n = (a;q)_\infty /  (a q^n;q)_\infty\\
&\left(=(1-a)(1 -aq) \cdots (1- a q^{n-1})\quad
\mbox{for $n$ a nonnegative integer}\right).
\end{align*}

Andrews \cite{An86} 
found Hecke-Rogers indefinite binary theta series  identities for all the fifth order
mock theta functions except for the following two: 
\begin{align*}
\chi_0(q) &= \sum_{n=0}^\infty \frac{q^n}{(q^{n+1};q)_n}
           = \sum_{n=0}^\infty \frac{q^n \,(q)_n}{(q)_{2n}}\\
&=1+q+{q}^{2}+2\,{q}^{3}+{q}^{4}+3\,{q}^{5}+2\,{q}^{6}+3\,{q}^{7}+\cdots,
\end{align*}
and
\begin{align*}
\chi_1(q) &= \sum_{n=0}^\infty \frac{q^n}{(q^{n+1};q)_{n+1}}
           = \sum_{n=0}^\infty \frac{q^n \,(q)_n}{(q)_{2n+1}}\\
&=1+2\,q+2\,{q}^{2}+3\,{q}^{3}+3\,{q}^{4}+4\,{q}^{5}+4\,{q}^{6}+
6\,{q}^{7}+\cdots.
\end{align*}
Zwegers \cite{Zw09} found triple
sum identities for $\chi_0(q)$ and $\chi_1(q)$.  Zagier \cite{Zag06}
stated indefinite binary theta series identities for these two functions but gave 
few details.  We find new
Hecke-Rogers indefinite binary theta series identities for these two functions.
In Section \sect{zagier} we compare our results with Zagier's.

\begin{thm}
\label{thm:chi01}
\begin{align}
&(q)_\infty (\chi_0(q) - 2) \nonumber\\ 
&=\sum_{j=0}^\infty 
\sum_{\substack{-j \le 3m \le j}}
\sgn(m) (-1)^{m+j+1} q^{ j(3j+1)/2 - m(15m+1)/2 }(1-q^{2j+1})
\label{eq:chi01a}\\
&+
\sum_{j=1}^\infty 
\sum_{\substack{-j-1 \le 3m \le j-1}}
\sgn(m) (-1)^{m+j+1} q^{ j(3j+1)/2 - m(15m+11)/2 -1}(1-q^{2j+1}),
\nonumber
\end{align}
and
\begin{align}
&(q)_\infty \chi_1(q) \nonumber\\
&=\sum_{j=1}^\infty 
\sum_{\substack{-j \le 3m \le j-1}}
\sgn(m) (-1)^{m+j+1} q^{ j(3j-1)/2 - m(15m+7)/2 -1}(1+q^{j})
\label{eq:chi01b}\\
&+
\sum_{j=1}^\infty 
\sum_{\substack{-j-1 \le 3m \le j-2}}
\sgn(m) (-1)^{m+j+1} q^{ j(3j-1)/2 - m(15m+13)/2 -2}(1+q^{j}),
\nonumber
\end{align}
where
$$
\sgn(m) =\begin{cases} 1 & \mbox{if $m\ge0$},\\
                       -1 & \mbox{if $m<0$}. \end{cases}
$$
\end{thm}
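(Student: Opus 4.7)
My plan is to derive both identities by Bailey-pair techniques, consistent with the abstract's mention of Bailey pairs and conjugate Bailey pairs and paralleling Andrews' treatment \cite{An86} of the remaining fifth and seventh order mock theta functions. The overall strategy is to represent $(q)_\infty \chi_i(q)$ as one side of a bilinear Bailey-pair identity whose other side is a bilateral double sum with indefinite quadratic exponent, and then to put that side into Hecke--Rogers form by a sign-splitting trick.

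First, starting from the hypergeometric representations
\[
\chi_0(q) = \sum_{n=0}^\infty \frac{q^n(q)_n}{(q)_{2n}}, \qquad \chi_1(q) = \sum_{n=0}^\infty \frac{q^n(q)_n}{(q)_{2n+1}},
\]
I would read off Bailey pairs $(\alpha_n,\beta_n)$ whose $\beta_n$-sum reproduces the $\chi_i$ summand (likely relative to $a=1$ for $\chi_0$ and $a=q$ for $\chi_1$); these are standard and can be located in Slater's tables or in \cite{An86}. Second, I would apply a conjugate Bailey pair $(\alpha_n^*,\beta_n^*)$ and invoke the duality $\sum_n \alpha_n\beta_n^* = \sum_n \beta_n\alpha_n^*$ to express $(q)_\infty\chi_i(q)$ as a double sum whose exponent contains $15m^2/2$ (with the $15$ arising as the product of $3$ from an order-$3$ inner factor and $5$ from the order-$5$ Bailey input). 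Third, the resulting sum is bilateral in one variable and indefinite as a binary quadratic form; I would apply the standard sign-splitting identity, rewriting $1$ as a combination of $\sgn$ functions and then using the Jacobi triple product, to cut the sum down to the wedge $-j \le 3m \le j$ (with its shifted analog contributing the second half of each identity). The factors $(1-q^{2j+1})$ in \eqref{eq:chi01a} and $(1+q^{j})$ in \eqref{eq:chi01b} will then emerge from pairing a theta-series term with a suitable shift.

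The principal obstacle is the second step: finding the precise conjugate Bailey pair that delivers the specific quadratic forms $j(3j\pm1)/2 - m(15m+c)/2$ with $c \in \{1,7,11,13\}$ appearing in the theorem. The chosen pair must simultaneously account for the split into two sub-sums and for the boundary term that produces the $-2$ shift in $\chi_0(q)$ in \eqref{eq:chi01a}. Once the correct conjugate Bailey pair is in hand, the remaining transformations should reduce to routine indefinite-theta-series bookkeeping; but the construction of this pair, tied to the modulus-$5$ structure of the fifth order mock theta functions, is the creative heart of the proof.
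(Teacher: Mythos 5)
Your high-level framework matches the paper's: write $\chi_0,\chi_1$ in the form $\sum_n \beta_n\delta_n$, apply the Bailey transform with a suitable conjugate Bailey pair, and use Slater-type Bailey pairs (the paper uses Slater's A(4) and A(2), both relative to $a=q$ -- note $\chi_0$ is also handled at $a=q$, after first rewriting $\chi_0(q)=1+\sum_{n\ge0} q^{2n+1}/(q^{n+1};q)_{n+1}$ via Watson). But there is a genuine gap at exactly the point you flag yourself: the conjugate Bailey pair is never constructed, and it is the whole content of the argument. The pair actually used is
\begin{align*}
\delta_n &= \frac{q^n (q)_n (q)_\infty}{1-q},\qquad
\gamma_n = \sum_{j=n+1}^\infty (-1)^{j+n+1} q^{j(3j-1)/2 - 3n(n+1)/2 -1}\left(1+q^j\right),
\end{align*}
relative to $(q,q)$; its verification rests on Heine's transformation together with the partial theta identity $\sum_{j\ge0}(xq)_j x^{j+1}q^{j+1}=\sum_{m\ge1}(-1)^{m-1}q^{m(3m-1)/2}x^{3m-2}(1+xq^m)$ (Andrews, Ex.~10, p.~29), applied with $x=q^n$. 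Without exhibiting this pair and proving it, the proposal is a plausible program rather than a proof.

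Your step three also misdescribes the mechanism, and as stated it would not go through. After the Bailey transform the $m$-sums are \emph{not} bilateral theta series needing a sign-splitting plus Jacobi triple product: the one-sided structure is already built into $\gamma_n$ (a partial theta function summed over $j\ge n+1$), so substituting Slater's $\alpha_n$ (with its three residue classes $n=3m-1,3m,3m+1$) directly yields sums over the wedges $-j\le 3m\le j-1$ and $-j-1\le 3m\le j-2$ after elementary reindexing of the negative-$m$ branch; the factor $(1+q^j)$ is inherited verbatim from $\gamma_n$, not produced by pairing theta terms. Likewise the ``$-2$'' in \eqref{eq:chi01a} does not come from the conjugate pair: the transform gives $(q)_\infty(\chi_0(q)-1)$, and one then subtracts $(q)_\infty$ via Euler's pentagonal number theorem, which is precisely what merges the pentagonal terms into the wedge sums and creates the factors $(1-q^{2j+1})$ after replacing $j$ by $j+1$ in two of the four resulting sums. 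So the missing ideas are concrete: (i) the explicit conjugate Bailey pair above with its Heine/partial-theta proof, and (ii) the pentagonal-number-theorem step for $\chi_0$; the indefinite-theta ``bookkeeping'' you defer to is elementary only once these are in place.
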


\textit{Idea of Proof}. 
We need the following conjugate Bailey pair (with $a=q$):
\begin{align*}
\delta_n &= \frac{q^n (q)_n (q)_\infty}{(1-q)},\\
\gamma_n &= \sum_{j=n+1}^\infty (-1)^{j+n+1} q^{j(3j-1)/2 - 3n(n+1)/2 -1}(1+q^j).
\end{align*}
The proof of this only uses Heine's transformation \cite[Eq.(III.I)]{Ga-Ra-book}
and an exercise from 
Andrews's book \cite[Ex.10,p.29]{An-book}.
The rest of the proof of Theorem \refthm{chi01} uses this conjugate Bailey pair, 
the Bailey transform
and Slater's Bailey pairs A(4) and A(2) (with $a=q$) \cite[p.463]{Sl51}.
The necessary background on conjugate Bailey pairs, Bailey pairs and
the Bailey transform is given in Section \sect{bailey}.
In Section \sect{proofthm1} the proof of Theorem \refthm{chi01} is completed.

Using the same conjugate Bailey pair and Slater's A(7*), A(8) and A(6) (with
$a=q$) lead to new 
Hecke-Rogers indefinite binary theta series identities 
for Ramanujan's three seventh order mock theta functions. 
A(7*) is actually a variant
of A(7) adjusted to work with $a=q$ instead of $a=1$. The three identities
given below in Theorem \refthm{mock7} appear to be new.                      
The following are Ramanujan's three seventh order mock theta functions:
\begin{align*}
\mathcal{F}_0(q) &= \sum_{n=0}^\infty \frac{q^{n^2}}{(q^{n+1};q)_n}\\
&=1+q+{q}^{3}+{q}^{4}+{q}^{5}+2\,{q}^{7}+{q}^{8}+2\,{q}^{9}+\cdots,\\
\mathcal{F}_1(q) &= \sum_{n=1}^\infty \frac{q^{n^2}}{(q^{n};q)_n}\\
&=q+{q}^{2}+{q}^{3}+2\,{q}^{4}+{q}^{5}+2\,{q}^{6}+2\,{q}^{7}+2\,{q}^{8}+\cdots,\\
\mathcal{F}_2(q) &= \sum_{n=0}^\infty \frac{q^{n^2+n}}{(q^{n+1};q)_{n+1}}\\ 
&=1+q+2\,{q}^{2}+{q}^{3}+2\,{q}^{4}+2\,{q}^{5}+3\,{q}^{6}+2\,{q}^{7}+\cdots. 
\end{align*}

We have the following theorem
\begin{thm}
\label{thm:mock7}
\begin{align}
&(q)_\infty \mathcal{F}_0(q) \nonumber\\
&=\sum_{j=1}^\infty 
\sum_{\substack{-j \le 3m \le j-1}}
\sgn(m) (-1)^{m+j+1} q^{ j(3j-1)/2 - m(21m+13)/2 -1}\nonumber\\
&{\hphantom{XXXXXXXXXXXXXXXX}}(1+q^{j})(1-q^{6m+1}),
\label{eq:F0id}\\
&(q)_\infty \mathcal{F}_1(q) \nonumber\\
&=\sum_{j=1}^\infty 
\sum_{\substack{-j \le 3m \le j-1}}
\sgn(m) (-1)^{m+j+1} q^{ j(3j-1)/2 - m(21m+5)/2}(1+q^{j})
\label{eq:F1id}\\
& + 
\sum_{j=2}^\infty 
\sum_{\substack{-j-1 \le 3m \le j-2}}
\sgn(m) (-1)^{m+j+1} q^{ j(3j-1)/2 - m(21m+19)/2 -2}(1+q^{j})\nonumber\\
&(q)_\infty \mathcal{F}_2(q) \nonumber\\
&=\sum_{j=1}^\infty 
\sum_{\substack{-j \le 3m \le j-1}}
\sgn(m) (-1)^{m+j+1} q^{ j(3j-1)/2 - m(21m+11)/2 -1}(1+q^{j})
\label{eq:F2id}\\
& + 
\sum_{j=2}^\infty 
\sum_{\substack{-j-1 \le 3m \le j-2}}
\sgn(m) (-1)^{m+j+1} q^{ j(3j-1)/2 - m(21m+17)/2 -2}(1+q^{j}).
\nonumber
\end{align}
\end{thm}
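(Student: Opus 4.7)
The plan is to mirror the proof strategy for Theorem \ref{thm:chi01} and apply the Bailey transform three times, once for each seventh-order mock theta function, using the conjugate Bailey pair $(\gamma_n,\delta_n)$ recorded in the proof sketch of Theorem \ref{thm:chi01} together with Slater's Bailey pairs A(6), A(7*), and A(8) (all relative to $a=q$). Recall that the Bailey transform asserts
\[
\sum_{n\ge 0}\alpha_n\gamma_n=\sum_{n\ge 0}\beta_n\delta_n,
\]
so with $\delta_n=q^n(q)_n(q)_\infty/(1-q)$ the right-hand side becomes
\[
\frac{(q)_\infty}{1-q}\sum_{n\ge 0}q^n(q)_n\beta_n,
\]
and for each Slater pair this specialises, after an elementary rearrangement (shifts of summation and collapse of $(q)_n/(q)_{2n}$-type ratios), to $(q)_\infty \mathcal{F}_0(q)$, $(q)_\infty \mathcal{F}_1(q)$, $(q)_\infty \mathcal{F}_2(q)$ respectively. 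Verifying this matching between $\beta_n$ and the $q^{n^2}$-type series on the left is the easiest step, and parallels Slater's own recognition of seventh-order series in \cite{Sl51}.

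Next, I would compute the left-hand side $\sum \alpha_n \gamma_n$. Inserting the explicit $\alpha_n$ from the relevant Slater pair together with
\[
\gamma_n=\sum_{j\ge n+1}(-1)^{j+n+1}q^{j(3j-1)/2-3n(n+1)/2-1}(1+q^j)
\]
yields a double sum over $n$ and $j$ with $j\ge n+1$. Swapping the order of summation so that $j$ is the outer index and $n$ ranges over $0\le n\le j-1$, the quadratic-times-linear shape of Slater's $\alpha_n$ combines with the $q^{-3n(n+1)/2}$ factor in $\gamma_n$ to give an exponent that is quadratic in a new index $m$ tied to $n$. The alternating signs $(-1)^{n+j+1}$ in $\gamma_n$, when combined with the signs packed into $\alpha_n$ and extended from the half-range $n\ge 0$ to a symmetric range via the standard trick of writing $1=\sgn(m)\cdot\sgn(m)$, recover the factor $\sgn(m)(-1)^{m+j+1}$ and the characteristic indefinite binary index set $-j\le 3m\le j-1$ or $-j-1\le 3m\le j-2$.

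The computations differ for the three $\mathcal{F}_i$ only in which of A(6), A(7*), A(8) is used; since A(7*) is not on Slater's original list but is the $a=q$ adjustment of A(7), I would first derive A(7*) explicitly, presumably by specialising a $_6\phi_5$ or $_4\phi_3$ identity in the spirit of Slater's derivations, and verify the defining identity $\beta_n=\sum_{r=0}^n \alpha_r/((q)_{n-r}(aq)_{n+r})$ with $a=q$. This derivation of A(7*) is the step I expect to demand the most care, since it is the one ingredient not already standard in the literature. It should not itself be hard, but a misplaced shift there would propagate into the wrong coefficients in every term of \eqref{eq:F1id}.

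Finally, the fact that identity \eqref{eq:F0id} is a single double sum while \eqref{eq:F1id} and \eqref{eq:F2id} are each a sum of two double sums reflects the arithmetic shape of the corresponding $\alpha_n$'s: in A(7*) the $\alpha_n$ is essentially a single monomial in $q^{an+b}$, whereas in A(6) and A(8) it is a binomial of the form $q^{c_1 n^2+d_1 n}-q^{c_2 n^2+d_2 n}$. Splitting that binomial produces the two nested double sums with their distinct linear shifts $m(21m+5)/2$ versus $m(21m+19)/2$, and $m(21m+11)/2$ versus $m(21m+17)/2$. Once this bookkeeping of shifts, signs, and index relabelings is carried out, the three identities of Theorem \ref{thm:mock7} drop out directly from the Bailey transform.
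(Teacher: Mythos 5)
Your overall plan is the same as the paper's: the conjugate Bailey pair of Theorem \refthm{conjbpair} is fed into the Bailey transform with Slater's pairs A(7*), A(8), A(6) (relative to $a=q$), the $\beta_n\delta_n$ side is recognized as $(q)_\infty\mathcal{F}_0$, $(q)_\infty\mathcal{F}_1$ (up to a factor $q$), $(q)_\infty\mathcal{F}_2$, and the $\alpha_n\gamma_n$ side is reindexed into the stated indefinite binary sums. Two points of comparison. First, for A(7*) the paper does not go back to a basic hypergeometric specialisation: it simply divides Slater's finite identity (3.4) of \cite{Sl51} by $(1-q)^{-1}$ and reads off the pair (also in Warnaar \cite{Wa03}), which is quicker than the route you sketch, though your route is viable. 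Second, and more importantly, your structural explanation in the last paragraph is reversed, and if you followed it literally your bookkeeping would come out wrong. In A(7*) the entries $\alpha_{3m-1}$ and $\alpha_{3m}$ are each \emph{differences} of two monomials and $\alpha_{3m+1}=0$; it is exactly this two-term shape, after reflecting $m\mapsto -m$, that collapses \eqn{F0id} to a single double sum over $-j\le 3m\le j-1$ carrying the extra factor $(1-q^{6m+1})$. By contrast, in A(8) and A(6) the entries at $n=3m-1$ and $n=3m$ are single monomials and only $\alpha_{3m+1}$ is a (negative) sum of two monomials; the first double sum in \eqn{F1id} and \eqn{F2id} (shift $m(21m+5)/2$, resp.\ $m(21m+11)/2$) comes from merging the two monomial classes via $m\mapsto -m$, while the second (shift $m(21m+19)/2$, resp.\ $m(21m+17)/2$) comes from the $n=3m+1$ class alone, whose two monomials merge under $m\mapsto -m-1$ (e.g.\ $-(-m-1)(21(-m-1)+23)/2-3=-m(21m+19)/2-2$). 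So the two double sums are not produced by ``splitting a binomial'' into two distinct quadratic shifts. This is a bookkeeping error rather than a failure of the method, since writing the Slater pairs down explicitly forces the correct structure, but as stated the explanation does not match the actual computation.
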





We prove this theorem in Section \sect{proofthm2}.
In his last letter to Hardy, all that Ramanujan said about the seventh order 
functions was that there were not related to each other.
Surprisingly we show that the coefficients of the three seventh order
functions are indeed related, although this is probably not the kind of
relationship that Ramanujan had in mind. For example we find for $n\ge0$ that
\begin{align}
f_0(25 n + 8) &= f_2(n),
\label{eq:f025}\\
f_1(25 n + 1) &= f_0(n),
\label{eq:f125}\\
f_2(25 n - 3) &= -f_1(n),
\label{eq:f225}
\end{align}
where 
we define $f_j(n)$ by
$$                    
\sum_{n=0}^\infty f_j(n) q^n = (q)_\infty \,\mathcal{F}_j(q),
$$
for $j=0$, $1$, $2$. This and more general results including analogous
results for the fifth order functions are proved in Section \sect{zagier}.

\section{The Bailey Transform and Conjugate Bailey Pairs}
\label{sec:bailey}

\begin{thm}[The Bailey Transform]
\label{thm:baileytrans}
Subject to suitable convergence conditions, if
\beq
\beta_n = \sum_{r=0}^n \alpha_r u_{n-r} v_{n+r},\quad\mbox{and}
\qquad
\gamma_n = \sum_{r=n}^\infty \delta_r u_{r-n} v_{r+n},
\label{eq:bgdef}
\eeq
then
\beq
\sum_{n=0}^\infty \alpha_n \gamma_n = \sum_{n=0}^\infty \beta_n \delta_n.
\label{eq:bsum}
\eeq
\end{thm}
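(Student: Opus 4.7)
The plan is to prove the Bailey Transform by a direct interchange of the order of summation in the double sum $\sum_n \beta_n \delta_n$, then recognize the inner sum as $\gamma_r$. Concretely, I would start from the right-hand side of \eqn{bsum}, substitute the definition of $\beta_n$ from \eqn{bgdef}, obtaining
\beqs
\sum_{n=0}^\infty \beta_n \delta_n
= \sum_{n=0}^\infty \delta_n \sum_{r=0}^n \alpha_r u_{n-r} v_{n+r}.
\eeqs
The double sum runs over the region $\{(n,r): 0 \le r \le n\}$, which can equally be written as $\{(n,r): r \ge 0,\ n \ge r\}$, so exchanging the order gives
\beqs
\sum_{n=0}^\infty \beta_n \delta_n
= \sum_{r=0}^\infty \alpha_r \sum_{n=r}^\infty \delta_n \, u_{n-r} v_{n+r}.
\eeqs

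Next, I would reindex the inner sum by comparing it with the definition of $\gamma_n$ in \eqn{bgdef}: setting the outer index $r$ to play the role of $n$ in the definition of $\gamma$, the inner sum $\sum_{n=r}^\infty \delta_n u_{n-r} v_{n+r}$ is exactly $\gamma_r$. Hence
\beqs
\sum_{n=0}^\infty \beta_n \delta_n = \sum_{r=0}^\infty \alpha_r \gamma_r,
\eeqs
which is \eqn{bsum} (after renaming $r$ back to $n$).

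The only substantive point is the legitimacy of the interchange of summation, which is the reason for the qualifier ``subject to suitable convergence conditions'' in the statement; in all applications in this paper the series involved will be absolutely convergent as formal power series in $q$ (coefficient-wise only finitely many terms contribute to each power of $q$), so Fubini applies trivially. I therefore do not expect any real obstacle: the argument is a one-line Fubini/reindexing, and the whole content of the theorem lies in the bookkeeping of the triangular index set $0\le r \le n$ versus $n \ge r \ge 0$.
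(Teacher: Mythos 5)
Your proof is correct: substituting the definition of $\beta_n$, interchanging the order of summation over the triangular region $0\le r\le n$, and recognizing the inner sum as $\gamma_r$ is exactly the standard argument for the Bailey Transform, and the remark about convergence (each power of $q$ receiving only finitely many contributions in the applications at hand) is the right justification for the interchange. The paper states this theorem without proof, citing Bailey's classical work, so your argument supplies precisely the intended one-line reindexing proof.
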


When applying his transform, Bailey \cite{Ba48} 
chose $u_n = 1/(q)_n$ and $v_n=1/(aq;q)_n$. This motivates the following
definitions:
\begin{defn}
A pair of sequences $(\alpha_n,\beta_n)$ is a \textbf{Bailey pair}
relative to $(a,q)$ if
\beq
\beta_n = \sum_{r=0}^n \frac{\alpha_r}{(q)_{n-r} (aq)_{n+r}},
\eeq
for $n\ge0$.
\end{defn}
\begin{defn}
A pair of sequences $(\gamma_n,\delta_n)$ is a \textbf{conjugate Bailey pair}
relative to $(a,q)$ if
\beq
\gamma_n = \sum_{r=n}^\infty \frac{\delta_r}{(q)_{r-n} (aq)_{r+n}},
\eeq
for $n\ge0$.
\end{defn}

The basic idea is to find a suitable conjugate Bailey pair and apply
the Bailey Transform using known Bailey pairs.

\begin{thm}
\label{thm:conjbpair}
The sequences
\begin{align}
\delta_n &= \frac{q^n (q)_n (q)_\infty}{(1-q)},
\label{eq:dn}\\
\gamma_n &= \sum_{j=n+1}^\infty (-1)^{j+n+1} q^{j(3j-1)/2 - 3n(n+1)/2 -1}(1+q^j),
\label{eq:gn}
\end{align}
form a conjugate Bailey pair relative to $(q,q)$; i.e.\ $a=q$.
\end{thm}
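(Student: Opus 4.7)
The plan is to verify the conjugate Bailey pair identity
\[
\gamma_n = \sum_{r=n}^\infty \frac{\delta_r}{(q)_{r-n}(q^2;q)_{r+n}}
\]
by direct computation. Substituting the formula for $\delta_r$, using $(1-q)(q^2;q)_{r+n} = (q)_{r+n+1}$, and setting $r = n+s$ reduces the right side to
\[
q^n(q)_\infty \sum_{s=0}^\infty \frac{q^s (q)_{n+s}}{(q)_s (q)_{2n+s+1}}.
\]
After the factorizations $(q)_{n+s} = (q)_n (q^{n+1};q)_s$ and $(q)_{2n+s+1} = (q)_{2n+1}(q^{2n+2};q)_s$, the task becomes showing that
\[
\frac{q^n (q)_n (q)_\infty}{(q)_{2n+1}}\,{}_2\phi_1(q^{n+1},0;q^{2n+2};q,q) = \gamma_n.
\]

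I would then apply Heine's transformation \cite[Eq.~(III.I)]{Ga-Ra-book} in its $b \to 0$ limit, which uses $(b;q)_\infty \to 1$ and $(c/b;q)_s b^s \to (-c)^s q^{s(s-1)/2}$. This converts the ${}_2\phi_1$ into a series whose summand carries a Gaussian factor $q^{s(s-1)/2}$. At that point I invoke the identity of Andrews \cite[Ex.~10, p.~29]{An-book}, which I expect to package such a Gaussian-weighted single sum as a Hecke-type series of shape $\sum_j (-1)^j q^{j(3j-1)/2 + (\text{linear in }j, n)}(1+q^j)$. Collecting the accumulated prefactors, reindexing the sum over $j$ to start at $n+1$, and absorbing the overall powers $q^n$, $q^{-3n(n+1)/2}$ and the sign $(-1)^{n+1}$ should then deliver exactly the formula \eqref{eq:gn} for $\gamma_n$.

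As a sanity check, the $n=0$ case can be done without any machinery: the inner sum reduces to $\sum_{s \ge 0} q^s/(q)_{s+1} = q^{-1}\bigl(1/(q)_\infty - 1\bigr)$ by Euler's $q$-exponential identity, so multiplication by $(q)_\infty$ gives $q^{-1}(1 - (q)_\infty)$, which equals $\gamma_0$ upon applying Euler's pentagonal number theorem $(q)_\infty = \sum_{j=-\infty}^\infty (-1)^j q^{j(3j-1)/2}$. This confirms the shape of the answer and strongly suggests that the general case arises from a Heine-based generalization of this very computation, with Andrews's exercise playing the role played by the pentagonal number theorem at $n=0$.

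The main obstacle, I expect, is the bookkeeping of $q$-exponents through the Heine-plus-Andrews chain: in particular, extracting the precise pentagonal exponent $j(3j-1)/2 - 3n(n+1)/2 - 1$ from a single Gaussian-weighted sum, and verifying that the range of summation starts at $j = n+1$ with the correct sign $(-1)^{j+n+1}$. The $b \to 0$ limit of Heine is delicate but standard, and the pentagonal structure should fall out naturally once Andrews's exercise is brought to bear.
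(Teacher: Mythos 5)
Your overall architecture is the same as the paper's: reduce $\gamma_n$ to a ${}_2\phi_1$, apply Heine's transformation (III.1), then finish with Andrews's exercise. But the specific way you propose to apply Heine derails the last step. Taking the $b\to 0$ limit (with $(c/b;q)_s b^s \to (-c)^s q^{s(s-1)/2}$) does give a correct identity, namely
\begin{equation*}
\gamma_n \;=\; \frac{q^n}{1-q^{n+1}}\sum_{s\ge 0}\frac{(-1)^s\,q^{(2n+2)s+s(s-1)/2}}{(q^{n+2};q)_s},
\end{equation*}
a Rogers--Fine type series with a Gaussian in the numerator and a Pochhammer symbol in the denominator. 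Andrews's Exercise 10 (p.~29), however, is not a statement about such Gaussian-weighted sums: it reads
\begin{equation*}
\sum_{j=0}^\infty (xq)_j\,x^{j+1}q^{j+1}=\sum_{m=1}^\infty(-1)^{m-1}q^{m(3m-1)/2}x^{3m-2}(1+xq^m),
\end{equation*}
i.e.\ a partial theta identity for a Pochhammer-weighted sum with no Gaussian factor. So the step ``invoke Ex.~10 to convert the Gaussian-weighted sum to a Hecke-type series'' does not apply to the series you actually produce; you would instead need a different partial theta identity (one of Rogers's, with the Pochhammer in the denominator), which you have not identified.

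The repair is small and brings you exactly to the paper's proof: since the ${}_2\phi_1$ is symmetric in its numerator parameters, apply Heine (III.1) with the zero parameter kept as ``$a$'' and transform on $b=q^{n+1}$ (so the new argument is $q^{n+1}$ and the new denominator parameter is $az=0$). No Gaussian appears; the prefactors collapse via $(q^{n+1};q)_\infty=(q)_\infty/(q)_n$ and $(q^{2n+2};q)_\infty=(q)_\infty/(q)_{2n+1}$, and one gets
\begin{equation*}
\gamma_n=q^n\sum_{j\ge 0}(q^{n+1};q)_j\,q^{(n+1)j},
\end{equation*}
which is precisely the left side of Andrews's exercise at $x=q^n$; the shift $m\mapsto m-n$ then yields \eqref{eq:gn} with the stated range $j\ge n+1$ and sign $(-1)^{j+n+1}$. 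Your $n=0$ sanity check via Euler's pentagonal number theorem is fine and is consistent with either route, but as written your general-$n$ chain has a genuine mismatch at the Andrews step.
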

\begin{rem}
We note that this result can be deduced from a special case of a result of Lovejoy
\cite[Thm1.1(4),p.53]{Lo12}.
We give a simple proof that uses only Heine's transformation and
a combinatorial result of Andrews \cite[Ex.10,p.29]{An-book}.
\end{rem}
\begin{proof}
We let 
\beqs
\delta_n = (q)_n (q)_\infty \frac{q^n}{(1-q)},
\eeqs
and
\beqs
\gamma_n = \sum_{r=n}^\infty \frac{\delta_r}{(q)_{r-n} (q^2;q)_{r+n}}
= (q)_\infty \sum_{r=n}^\infty \frac{(q)_r q^r}{(q)_{r-n} (q;q)_{r+n+1}}.
\eeqs
We must show that $\gamma_n$ is given by \eqn{gn}.
\begin{align*}
&\sum_{r=n}^\infty \frac{(q)_r q^r}{(q)_{r-n} (q;q)_{r+n+1}} 
=\sum_{r=0}^\infty \frac{(q)_{r+n} q^{r+n}}{(q)_{r} (q)_{r+2n+1}}\\
&= q^n\frac{(q)_n}{(q)_{2n+1}}
\sum_{r=0}^\infty \frac{(q^{n+1};q)_{r} q^{r}}{(q)_{r} (q^{2n+2};q)_{r}}
= q^n\frac{(q)_n}{(q)_{2n+1}}
\qbas{0}{q^{n+1}}{q}{q}{q^{2n+2}}\\
&= q^n\frac{(q)_n}{(q)_{2n+1}} \frac{(q^{n+1};q)_\infty}{(q^{2n+2};q)_\infty (q)_\infty} \, \qbas{q^{n+1}}{q}{q}{q^{n+1}}{0}\\
&=q^n \frac{1}{(q)_\infty} \sum_{j=0}^\infty (q^{n+1};q)_j q^{(n+1)j},
\end{align*}
by Heine's transformation \cite[Eq.(III.I)]{Ga-Ra-book}, so that
\beq
\gamma_n = q^n \sum_{j=0}^\infty (q^{n+1};q)_j q^{(n+1)j}.
\label{eq:gnb}
\eeq
From Andrews \cite[Ex.10,p.29]{An-book}
we have
\beq
\sum_{j=0}^\infty (xq)_j x^{j+1} q^{j+1}
=\sum_{m=1}^\infty (-1)^{m-1} q^{m(3m-1)/2} x^{3m-2}(1 + x q^m).
\label{eq:andex10}
\eeq
Using \eqn{gnb} and \eqn{andex10} with $x=q^n$ we have
\begin{align*}
\gamma_n &= q^n \sum_{j=0}^\infty (q^{n+1};q)_j q^{(n+1)j} \\
&=\sum_{m=1}^\infty (-1)^{m-1} q^{m(3m-1)/2 + n(3m-2)-1}(1+q^{m+n})\\
&=\sum_{m=n+1}^\infty (-1)^{m+n+1} q^{m(3m-1)/2 - 3n(n+1)/2 -1}(1+q^{m}),
\end{align*}
as required.
We note that Subbarao \cite{Su71} gave
a combinatorial proof of \eqn{andex10} by using a variant of Franklin's 
involution \cite[pp.10--11]{An-book}.
\end{proof}

\section{Proof of Theorem \refthm{chi01}}
\label{sec:proofthm1}

To prove Theorem \refthm{chi01} we will apply the Bailey Transform,
with $u_n= 1/(q)_n$, $v_n=1/(q^2;q)_n$, using the conjugate Bailey pair
in Theorem \refthm{conjbpair}, and Slater's Bailey pairs $A(4)$ and $A(2)$.
By \cite[p.463]{Sl51}, the following gives Slater's $A(4)$ Bailey pair
relative to $(q,q)$:
\beq   
\beta_n = \frac{q^n}{\aqprod{q^2}{q}{2n}}
        ,\qquad
        \alpha_n =
        \left\{\begin{array}{ll}
                 q^{6m^2-4m}& \mbox{ if } n = 3m-1,
                \\
                q^{6m^2+ 4m}& \mbox{ if } n = 3m,
                \\
                -q^{6m^2 + 8m + 2}-q^{6m^2+4m}& \mbox{ if } n = 3m+1.
        \end{array}\right.
\label{eq:A4}
\eeq
By \cite[Eq.(A${}_0$),p.278]{Wa36b} we have
\begin{align*}
\chi_0(q) &= \sum_{n=0}^\infty \frac{q^n}{(q^{n+1};q)_n}
          =1 +  \sum_{n=0}^\infty \frac{q^{2n+1}}{\aqprod{q^{n+1}}{q}{n+1}}\\
&=1 + q \sum_{n=0}^\infty
      \frac{q^n}{\aqprod{q^2}{q}{2n}}\cdot \frac{q^n(q)_n}{(1-q)} 
=1 + \frac{q}{(q)_\infty} \sum_{n=0}^\infty \beta_n \delta_n,
\end{align*}
where $\delta_n$ is given in \eqn{dn}.
Thus by the Bailey Transform and \eqn{A4} we have
\begin{align}
&(q)_\infty \Parans{\chi_0(q) - 1} 
= q\sum_{n=0}^\infty \beta_n \delta_n
= q\sum_{n=0}^\infty \alpha_n \gamma_n
\label{eq:chi0id1}\\
&=\sum_{m=1}^\infty \sum_{j=3m}^\infty (-1)^{m+j} q^{j(3j-1)/2 - m(15m-1)/2}
 (1+q^j) \nonumber\\
& \qquad
+\sum_{m=0}^\infty \sum_{j=3m+1}^\infty (-1)^{m+j+1} q^{j(3j-1)/2 - m(15m+1)/2}
 (1+q^j) \nonumber\\
& \qquad
+\sum_{m=0}^\infty \sum_{j=3m+2}^\infty (-1)^{m+j+1} 
      \left\{ q^{j(3j-1)/2 - m(15m+11)/2-1} \right.\nonumber\\
&\left.\hphantom{XXXXXXXXXXXXXXX} + q^{j(3j-1)/2 - m(15m+19)/2-3}\right\} (1+q^j)\nonumber\\
&=
\sum_{j=1}^\infty 
\sum_{\substack{-j \le 3m \le j-1}}
\sgn(m) (-1)^{m+j+1} q^{ j(3j-1)/2 - m(15m+1)/2 }(1+q^{j})\nonumber\\
& \qquad +
\sum_{j=2}^\infty 
\sum_{\substack{-j-1 \le 3m \le j-2}}
\sgn(m) (-1)^{m+j+1} q^{ j(3j-1)/2 - m(15m+11)/2 -1}(1+q^{j}),
\nonumber
\end{align}
by noting that 
$$
-(-m-1)(15(-m-1)+19)/2-3 = -m(15m+11)/2-1.
$$
Now from Euler's Pentagonal Number Theorem \cite[p.11]{An-book} we have
\beq
(q)_\infty = \sum_{n=-\infty}^\infty (-1)^n q^{n(3n-1)/2}
=\sum_{m=-\infty}^\infty q^{6m^2+m} - \sum_{m=-\infty}^\infty q^{6m^2+5m + 1}.
\label{eq:EPT}
\eeq
By \eqn{chi0id1} and \eqn{EPT} we have
\begin{align*}
&(q)_\infty \Parans{\chi_0(q) - 2} =
(q)_\infty \Parans{\chi_0(q) - 1} - (q)_\infty\\
&=\sum_{j=1}^\infty 
\sum_{\substack{-j \le 3m \le j-1}}
\sgn(m) (-1)^{m+j+1} q^{ j(3j-1)/2 - m(15m+1)/2 }(1+q^{j})\\
& \qquad +
\sum_{j=1}^\infty 
\sum_{\substack{-j-1 \le 3m \le j-2}}
\sgn(m) (-1)^{m+j+1} q^{ j(3j-1)/2 - m(15m+11)/2 -1}(1+q^{j}),\\
&\qquad -\sum_{m=-\infty}^\infty q^{6m^2+m} + \sum_{m=-\infty}^\infty q^{6m^2+5m + 1}\\
&=\sum_{j=1}^\infty 
\sum_{\substack{-j+1 \le 3m \le j-1}}
\sgn(m) (-1)^{m+j+1} q^{ j(3j-1)/2 - m(15m+1)/2 }\\
& \qquad +
 \sum_{j=0}^\infty 
\sum_{\substack{-j \le 3m \le j}}
\sgn(m) (-1)^{m+j+1} q^{ j(3j+1)/2 - m(15m+1)/2 }\\
& \qquad+
\sum_{j=2}^\infty 
\sum_{\substack{-j \le 3m \le j-2}}
\sgn(m) (-1)^{m+j+1} q^{ j(3j-1)/2 - m(15m+11)/2 -1} \\
& \qquad +
\sum_{j=1}^\infty 
\sum_{\substack{-j-1 \le 3m \le j-1}}
\sgn(m) (-1)^{m+j+1} q^{ j(3j+1)/2 - m(15m+11)/2 -1}.
\end{align*}
On the right side of the last equation above replace $j$ by $j+1$ in 
the first and third double sums to obtain
\begin{align*}
&(q)_\infty (\chi_0(q) - 2) \\ 
&=\sum_{j=0}^\infty 
\sum_{\substack{-j \le 3m \le j}}
\sgn(m) (-1)^{m+j+1} q^{ j(3j+1)/2 - m(15m+1)/2 }(1-q^{2j+1})
\\
&+
\sum_{j=1}^\infty 
\sum_{\substack{-j-1 \le 3m \le j-1}}
\sgn(m) (-1)^{m+j+1} q^{ j(3j+1)/2 - m(15m+11)/2 -1}(1-q^{2j+1}),
\end{align*}
which is \eqn{chi01a}.

To prove \eqn{chi01b} we need Slater's \cite[p.463]{Sl51} A(2) Bailey pair relative to $(q,q)$:
\beq   
\beta_n = \frac{1}{\aqprod{q^2}{q}{2n}}
        ,\qquad
        \alpha_n =
        \left\{\begin{array}{ll}
                 q^{6m^2-m}& \mbox{ if } n = 3m-1,
                \\
                q^{6m^2+ m}& \mbox{ if } n = 3m,
                \\
                -q^{6m^2 + 5m + 1}-q^{6m^2+7m+2}& \mbox{ if } n = 3m+1.
        \end{array}\right.
\label{eq:A2}
\eeq
We have
\begin{align*}
\chi_1(q) &= \sum_{n=0}^\infty \frac{q^n}{(q^{n+1};q)_{n+1}}
          =\sum_{n=0}^\infty \frac{q^{n}(q)_n}{(q)_{2n+1}}                \\
&=\sum_{n=0}^\infty
      \frac{1}{\aqprod{q^2}{q}{2n}}\cdot \frac{q^n(q)_n}{(1-q)} 
=\frac{1}{(q)_\infty} \sum_{n=0}^\infty \beta_n \delta_n.
\end{align*}
By the Bailey Transform and \eqn{A2} we have
\begin{align}
&(q)_\infty \chi_1(q) 
=\sum_{n=0}^\infty \beta_n \delta_n
= \sum_{n=0}^\infty \alpha_n \gamma_n
\label{eq:chi1id1}\\
&=\sum_{m=1}^\infty \sum_{j=3m}^\infty (-1)^{m+j} q^{j(3j-1)/2 - m(15m-7)/2-1}
 (1+q^j) \nonumber\\
& \qquad
+\sum_{m=0}^\infty \sum_{j=3m+1}^\infty (-1)^{m+j+1} q^{j(3j-1)/2 - m(15m+7)/2-1}
 (1+q^j) \nonumber\\
& \qquad
+\sum_{m=0}^\infty \sum_{j=3m+2}^\infty (-1)^{m+j+1} 
      \left\{ q^{j(3j-1)/2 - m(15m+17)/2-3}\right.\nonumber\\
&\hphantom{XXXXXXXXXXXX}\left.+ q^{j(3j-1)/2 - m(15m+13)/2-2}\right\} (1+q^j)\nonumber\\
&=
\sum_{j=1}^\infty 
\sum_{\substack{-j \le 3m \le j-1}}
\sgn(m) (-1)^{m+j+1} q^{ j(3j-1)/2 - m(15m+7)/2 -1}(1+q^{j})\nonumber\\
& \qquad +
\sum_{j=1}^\infty 
\sum_{\substack{-j-1 \le 3m \le j-2}}
\sgn(m) (-1)^{m+j+1} q^{ j(3j-1)/2 - m(15m+13)/2 -2}(1+q^{j}),
\nonumber
\end{align}
by noting that
$$
-(-m-1)(15(-m-1)+17)/2-3 = -m(15m+13)/2-2.
$$
This completes the proof of Theorem \refthm{chi01}.

\section{Proof of Theorem \refthm{mock7}}
\label{sec:proofthm2}
To prove Theorem \refthm{mock7} we proceed as in Section \sect{proofthm1}.
This time we need Slater's Bailey pairs $A(6)$ and $A(8)$, and a variant
of her Bailey pair $A(7)$.

From \cite[Eq.(3.4),p.464]{Sl51} we have
\begin{align*}
&\frac{q^{n^2-n}}
       {(q)_{2n}} 
= \sum_{r=-[(n+1)/3]}^{[n/3]} 
\frac{(1-q^{6r+1})q^{3r^2-2r}}
     {(q)_{n+3r+1} (q)_{n-3r}}\\
&=
 \sum_{r=0}^{[n/3]} \frac{(1-q^{6r+1})q^{3r^2-2r}}
                                {(q)_{n-3r} (q)_{n+3r+1}}
+
 \sum_{r=1}^{[(n+1)/3]} 
\frac{(1-q^{-6r+1})q^{3r^2+2r}}
     {(q)_{n+3r} (q)_{n+1-3r}}
\\
&=
 \sum_{r=1}^{[(n+1)/3]} \frac{q^{3r^2+2r}-q^{3r^2-4r+1}}
                                {(q)_{n-(3r-1)} (q)_{n+(3r-1)+1}}
+
 \sum_{r=0}^{[n/3]} \frac{q^{3r^2-2r}-q^{3r^2+4r+1}}
                                {(q)_{n-3r} (q)_{n+3r+1}},
\end{align*}
so that
\begin{align*}
&\frac{(1-q)q^{n^2-n}}
       {(q)_{2n}} \\
&=
 \sum_{r=1}^{[(n+1)/3]} \frac{q^{3r^2+2r}-q^{3r^2-4r+1}}
                                {(q)_{n-(3r-1)} \aqprod{q^2}{q}{n+(3r-1)}}
+
 \sum_{r=0}^{[n/3]} \frac{q^{3r^2-2r}-q^{3r^2+4r+1}}
                                {(q)_{n-3r}\aqprod{q^2}{q}{n+3r}}.
\end{align*}
This implies the following Bailey pair relative to $(q,q)$:
\beq   
\beta_n = \frac{(1-q)q^{n^2-n}}{(q)_{2n}}
        ,\qquad
        \alpha_n =
        \left\{\begin{array}{ll}
                 q^{3m^2+2m}-q^{3m^2-4m+1}& \mbox{ if } n = 3m-1,
                \\
                q^{3m^2-2m}-q^{3m^2+4m+1}& \mbox{ if } n = 3m,
                \\
                0 & \mbox{ if } n = 3m+1.
        \end{array}\right.
\label{eq:A7star}
\eeq
We note that this Bailey pair was found by Warnaar \cite[p.375]{Wa03}
using a different method.
We have
\begin{align*}
\mathcal{F}_0(q) &= \sum_{n=0}^\infty \frac{q^{n^2}}{\aqprod{q^{n+1}}{q}{n}}\\
&=\sum_{n=0}^\infty
      \frac{(1-q)q^{n^2-n}}{(q)_{2n}}\cdot \frac{q^n(q)_n}{(1-q)} 
=\frac{1}{(q)_\infty} \sum_{n=0}^\infty \beta_n \delta_n.
\end{align*}
Thus by the Bailey Transform and \eqn{A7star} we have
\begin{align}
&(q)_\infty \mathcal{F}_0(q)              
= \sum_{n=0}^\infty \beta_n \delta_n
= \sum_{n=0}^\infty \alpha_n \gamma_n
\label{eq:F0id1}\\
&=\sum_{m=1}^\infty \sum_{j=3m}^\infty (-1)^{m+j} q^{j(3j-1)/2 - m(21m-13)/2-1}
 (1+q^j) \nonumber\\
& \qquad
+\sum_{m=1}^\infty \sum_{j=3m}^\infty (-1)^{m+j+1} q^{j(3j-1)/2 - m(21m-1)/2}
 (1+q^j) \nonumber\\
& \qquad
+\sum_{m=0}^\infty \sum_{j=3m+1}^\infty (-1)^{m+j+1} 
       q^{j(3j-1)/2 - m(21m+13)/2-1} 
(1+q^j)\nonumber\\
& \qquad
+\sum_{m=0}^\infty \sum_{j=3m+1}^\infty (-1)^{m+j} 
       q^{j(3j-1)/2 - m(21m+1)/2} 
(1+q^j)\nonumber\\
&=
\sum_{j=1}^\infty 
\sum_{\substack{-j \le 3m \le j-1}}
\sgn(m) (-1)^{m+j+1} q^{ j(3j-1)/2 - m(21m+13)/2-1 }(1+q^{j})\nonumber\\
& \qquad +
\sum_{j=1}^\infty 
\sum_{\substack{-j \le 3m \le j-1}}
\sgn(m) (-1)^{m+j} q^{ j(3j-1)/2 - m(21m+1)/2}(1+q^{j}),
\nonumber\\
&=
\sum_{j=1}^\infty 
\sum_{\substack{-j \le 3m \le j-1}}
\sgn(m) (-1)^{m+j+1} q^{ j(3j-1)/2 - m(21m+13)/2-1 }\nonumber\\
&\hphantom{XXXXXXXXXXXXXX}(1+q^{j})(1-q^{6m+1}),
\nonumber
\end{align}
which is \eqn{F0id}.

To prove \eqn{F1id} we need Slater's \cite[p.463]{Sl51} A(8) Bailey pair relative to $(q,q)$:
\beq   
\beta_n = \frac{q^{n^2+n}}{\aqprod{q^2}{q}{2n}}
        ,\qquad
        \alpha_n =
        \left\{\begin{array}{ll}
                 q^{3m^2-2m}& \mbox{ if } n = 3m-1,
                \\
                q^{3m^2+ 2m}& \mbox{ if } n = 3m,
                \\
                -q^{3m^2 + 4m + 1}-q^{3m^2+2m}& \mbox{ if } n = 3m+1.
        \end{array}\right.
\label{eq:A8}
\eeq
We have
\begin{align*}
\mathcal{F}_1(q) &= \sum_{n=1}^\infty \frac{q^{n^2}}
                                           {\aqprod{q^n}{q}{n}}
=q\sum_{n=0}^\infty \frac{q^{n^2+2n}}
                         {\aqprod{q^{n+1}}{q}{n+1}}\\
&=q\sum_{n=0}^\infty
      \frac{q^{n^2+n}}
           {\aqprod{q^2}{q}{2n}}\cdot \frac{q^n(q)_n}{(1-q)} 
=\frac{q}{(q)_\infty} \sum_{n=0}^\infty \beta_n \delta_n.
\end{align*}

Thus by the Bailey Transform and \eqn{A8} we have
\begin{align}
&(q)_\infty \mathcal{F}_1(q)              
= \sum_{n=0}^\infty \beta_n \delta_n
= \sum_{n=0}^\infty \alpha_n \gamma_n
\label{eq:F1id1}\\
&=\sum_{m=1}^\infty \sum_{j=3m}^\infty (-1)^{m+j} q^{j(3j-1)/2 - m(21m-5)/2}
 (1+q^j) \nonumber\\
& \qquad
+\sum_{m=0}^\infty \sum_{j=3m+1}^\infty (-1)^{m+j+1} q^{j(3j-1)/2 - m(21m+5)/2}
 (1+q^j) \nonumber\\
& \qquad
+\sum_{m=0}^\infty \sum_{j=3m+2}^\infty (-1)^{m+j+1} \left\{
       q^{j(3j-1)/2 - m(21m+19)/2-2} (1+q^j) \right.\nonumber\\
&\hphantom{XXXXXXXXX}\left.+ q^{j(3j-1)/2 - m(21m+23)/2-3} (1+q^j)\right\}
\nonumber\\
&=
\sum_{j=1}^\infty 
\sum_{\substack{-j \le 3m \le j-1}}
\sgn(m) (-1)^{m+j+1} q^{ j(3j-1)/2 - m(21m+5)/2 }(1+q^{j})\nonumber\\
& \qquad +
\sum_{j=2}^\infty 
\sum_{\substack{-j-1 \le 3m \le j-2}}
\sgn(m) (-1)^{m+j+1} q^{ j(3j-1)/2 - m(21m+19)/2-2}(1+q^{j}),
\nonumber
\end{align}
which is \eqn{F1id}.

To prove \eqn{F2id} we need Slater's \cite[p.463]{Sl51} A(6) Bailey pair relative to $(q,q)$:
\beq   
\beta_n = \frac{q^{n^2}}{\aqprod{q^2}{q}{2n}}
        ,\qquad
        \alpha_n =
        \left\{\begin{array}{ll}
                 q^{3m^2+m}& \mbox{ if } n = 3m-1,
                \\
                q^{3m^2-m}& \mbox{ if } n = 3m,
                \\
                -q^{3m^2 + m}-q^{3m^2+5m + 2}& \mbox{ if } n = 3m+1,
        \end{array}\right.
\label{eq:A6}
\eeq
We have
$$
\mathcal{F}_2(q) = \sum_{n=1}^\infty \frac{q^{n^2+n}}
                                           {\aqprod{q^{n+1}}{q}{n+1}}
=\sum_{n=0}^\infty
      \frac{q^{n^2}}
           {\aqprod{q^2}{q}{2n}}\cdot \frac{q^n(q)_n}{(1-q)} 
=\frac{1}{(q)_\infty} \sum_{n=0}^\infty \beta_n \delta_n.
$$                       

Thus by the Bailey Transform and \eqn{A6} we have
\begin{align}
&(q)_\infty \mathcal{F}_2(q)              
= \sum_{n=0}^\infty \beta_n \delta_n
= \sum_{n=0}^\infty \alpha_n \gamma_n
\label{eq:F2id1}\\
&=\sum_{m=1}^\infty \sum_{j=3m}^\infty (-1)^{m+j} q^{j(3j-1)/2 - m(21m-11)/2-1}
 (1+q^j) \nonumber\\
& \qquad
+\sum_{m=0}^\infty \sum_{j=3m+1}^\infty (-1)^{m+j+1} q^{j(3j-1)/2 - m(21m+11)/2-1}
 (1+q^j) \nonumber\\
& \qquad
+\sum_{m=0}^\infty \sum_{j=3m+2}^\infty (-1)^{m+j+1} \left\{
       q^{j(3j-1)/2 - m(21m+25)/2-4} (1+q^j)\right.\nonumber\\
&\hphantom{XXXXXXXXXXXX}\left. + q^{j(3j-1)/2 - m(21m+17)/2-2} (1+q^j)\right\}
\nonumber\\
&=
\sum_{j=1}^\infty 
\sum_{\substack{-j \le 3m \le j-1}}
\sgn(m) (-1)^{m+j+1} q^{ j(3j-1)/2 - m(21m+11)/2-1 }(1+q^{j})\nonumber\\
& \qquad +
\sum_{j=2}^\infty 
\sum_{\substack{-j-1 \le 3m \le j-2}}
\sgn(m) (-1)^{m+j+1} q^{ j(3j-1)/2 - m(21m+17)/2-2}(1+q^{j}),
\nonumber
\end{align}
which is \eqn{F2id}.
This completes the proof of Theorem \refthm{mock7}.

\section{Zagier's Mock Theta Function Identities and Related Results}
\label{sec:zagier}

In this section we write our double-series identities for the two fifth
order functions $\chi_0(q)$ and $\chi_1(q)$ and all three seventh
order functions $\mathcal{F}_j(q)$ ($j=0,1,2$) using Dirichlet characters.
This leads naturally to relations between the coefficients of these series 
as in Theorems \refthm{chirels} and \refthm{mock7rels}.

As mentioned before Andrews \cite{An86} obtained indefinite theta series  
identities
for all of Ramanujan's fifth order functions except $\chi_0(q)$
and $\chi_1(q)$. Using Andrews's results Zwegers \cite{Zw-thesis}
showed how to complete
all of Andrews's fifth order functions to weak harmonic Maass forms.
As noted by Watson \cite[pp.277-279]{Wa36b}, 
Ramanujan gave identities for $\chi_0(q)$
and $\chi_1(q)$ in terms of the other fifth order functions. Zagier
suggested  that indefinite theta function identities for $\chi_0(q)$ and $\chi_1(q)$
could be obtained from Ramanujan's results and Zwegers transformation
formulas, although he gave no details. 
We state Zagier's results in a modified form in the following
\begin{thm}
\label{thm:zagchithm}
\beqs             
(q)_\infty (2 - \chi_0(q))
= 
\sum_{\substack{ 5\abs{b} < \abs{a} \\ a+b\equiv 2\pmod{4}
\\ a\equiv 2 \pmod{5}}}
(-1)^a \sgn(a) \leg{-3}{a^2-b^2} 
  q^{\frac{1}{120}(a^2 - 5b^2) - \frac{1}{30}}
\eeqs              
and
\beqs                 
(q)_\infty \chi_1(q)
= 
\sum_{\substack{ 5\abs{b} < \abs{a} \\ a+b\equiv 2\pmod{4}
\\ a\equiv 4 \pmod{5}}}
(-1)^a \sgn(a) \leg{-3}{a^2-b^2} 
  q^{\frac{1}{120}(a^2 - 5b^2) - \frac{19}{30}}
\eeqs               
\end{thm}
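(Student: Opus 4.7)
The plan is to derive Theorem \refthm{zagchithm} from Theorem \refthm{chi01} by recasting the Hecke--Rogers $(j,m)$-parametrization into Zagier's $(a,b)$-parametrization. The central computation is that the exponent $\frac{1}{120}(a^2 - 5b^2) - \frac{1}{30}$ matches each of the two $(j,m)$-exponents in \eqn{chi01a} under appropriate linear substitutions.

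First I would use the factor $(1 - q^{2j+1})$ in \eqn{chi01a} together with the symmetry $j(3j+1)/2 + (2j+1) = (-j-1)(3(-j-1)+1)/2$ to extend each of the two $j$-sums to $j \in \mathbb{Z}$, folding the telescope into a single unrestricted sum over the symmetric wedge $6\abs{m} < \abs{2j+1}$. A direct calculation then shows that the substitution $(a, b) = (15j - 15m + 2,\, 15m - 3j)$ converts the exponent of the first double sum, and $(a, b) = (15j - 15m - 3,\, 15m - 3j - 5)$ converts the exponent of the second double sum, into $\frac{1}{120}(a^2 - 5b^2) - \frac{1}{30}$. Both substitutions automatically yield $a \equiv 2 \pmod{5}$, while $a + b \equiv 2 \pmod{4}$ imposes opposite parity conditions on $j + m$ on the two sheets, so that the two sheets together partition a large portion of the target lattice. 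The identities $a - 5b = 60m + 2$ and $a + 5b = 30(j - 3m) + 2$ on the first sheet (and their analogues on the second) convert the wedge $6\abs{m} < \abs{2j+1}$ into the Weyl chamber $5\abs{b} < \abs{a}$.

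The main obstacle is to match the weight $\sgn(m)(-1)^{m+j+1}$ together with the wedge indicator onto the weight $(-1)^a \sgn(a) \leg{-3}{a^2-b^2}$ together with $5\abs{b} < \abs{a}$. The Kronecker symbol records the residue of $a^2 - b^2 \pmod{3}$: on the first sheet $(a,b) \equiv (2, 0) \pmod{3}$, so $\leg{-3}{a^2-b^2} = +1$, whereas on the second $(a,b) \equiv (0, 1) \pmod{3}$, so $\leg{-3}{a^2-b^2} = -1$, in agreement with the relative sign between the two double sums in \eqn{chi01a}. A subtlety is that certain $(a,b)$ in Zagier's sum receive contributions from both double sums simultaneously (as one can already check at $q^1$, where the single pair $(a,b) = (12, \pm 2)$ is fed by one term from each extended sum), so the matching is not a bijection on individual terms but a more delicate recombination; verifying this requires a case analysis modulo $3$, $4$, and $5$ on each sheet and on the reflections $(a, b) \mapsto (-a, -b)$ and $(a, b) \mapsto (a, -b)$ that arise from extending the $j$-sum to $\mathbb{Z}$.

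The argument for $\chi_1(q)$ follows the same scheme applied to \eqn{chi01b}: the $(1 + q^j)$ factor induces the extension to $j \in \mathbb{Z}$ via $j(3j-1)/2 + j = (-j)(3(-j)-1)/2$, and a parallel change of variables produces the quadratic form $\frac{1}{120}(a^2 - 5b^2) - \frac{19}{30}$ with congruence $a \equiv 4 \pmod{5}$, while the sign-and-character bookkeeping is essentially identical.
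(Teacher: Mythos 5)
First, a point of orientation: the paper does not prove Theorem \refthm{zagchithm} at all. It is stated, with attribution, as Zagier's result (in modified form); the paper notes that Zagier gave no details and indicated a route via Ramanujan's identities expressing $\chi_0$ and $\chi_1$ in terms of the other fifth order functions together with Zwegers's transformation formulas. The paper's own identities are Theorems \refthm{chi01} and \refthm{chi60}, which involve the form $5a^2-b^2$ and a character mod $60$, and in the Concluding Remarks the author explicitly poses the problem of relating these directly to Zagier's Theorem \refthm{zagchithm}, which involves $a^2-5b^2$ and a character mod $3$. So what you are attempting is precisely the open problem the paper leaves unresolved, not a reconstruction of an argument the paper contains.

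As written, your derivation has a genuine gap. The substitution on the first sheet does match the exponent ($(15j-15m+2)^2-5(15m-3j)^2=180j^2+60j-900m^2-660m\cdot 0-900m^2\ldots$, more precisely $=120\bigl(j(3j+1)/2-m(15m+1)/2\bigr)+4$), and your fold $j\mapsto -j-1$ using $(1-q^{2j+1})$ is fine; but the second substitution is already off (with $b=15m-3j-5$ the linear part of $a^2-5b^2$ is $-240j+840m$, not $60j-660m$; you need $b=15m-3j+5$), and, more seriously, the regions and lattices do not correspond. Your claim that the wedge $6\abs{m}<\abs{2j+1}$ is carried to $5\abs{b}<\abs{a}$ is false: $5\abs{b}<\abs{a}$ is equivalent to $(a-5b)(a+5b)>0$, i.e.\ $\bigl(30(j-3m)+2\bigr)\bigl(60m+2\bigr)>0$ on the first sheet, which is a different wedge. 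Concretely, the folded point $(j,m)=(-1,0)$ lies in your extended wedge and maps to $(a,b)=(-13,3)$, which violates $5\abs{b}<\abs{a}$, yet carries a nonzero contribution at $q^{1}$; conversely the Zagier term $(a,b)=(12,-2)$ at $q^{1}$ (which does contribute: $a+b\equiv 2\pmod 4$, $a\equiv 2\pmod 5$, $\leg{-3}{140}=-1$) is not in the image of either of your substitutions. A density count shows the problem is structural, not cosmetic: your two sheets parametrize sublattices of total density $2/180=1/90$, whereas the lattice points surviving the Kronecker symbol in Zagier's sum (exactly one of $a,b$ divisible by $3$, $a+b\equiv 2\pmod 4$, $a\equiv 2\pmod 5$) have density $1/45$, so at most half of Zagier's terms can be reached, and some of your image points fall outside his Weyl chamber. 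Closing this requires a genuine rearrangement identity between indefinite theta series attached to $5a^2-b^2$ and $a^2-5b^2$ over different chambers and characters (delicate precisely because the automorphism group of an indefinite binary form is infinite); the ``case analysis modulo $3$, $4$, and $5$'' you defer to cannot by itself supply it. Either that identity must be proved, or one should follow the route the paper attributes to Zagier (Ramanujan's relations for $\chi_0,\chi_1$ plus Andrews's Hecke--Rogers expansions and Zwegers's transformations).
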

\begin{rem}
Here $\leg{-3}{\cdot}$ is the Kronecker symbol, and is a Dirichlet character mod $3$.
\end{rem}

Our Theorem \refthm{chi01} seems to differ from Zagier's Theorem.
In contrast to Zagier's theorem which involves a character mod $3$ our version
involves the Dirichlet  character mod $60$:  
$$
\chi_{60}(m) =
\begin{cases}
1 & \mbox{if $m\equiv 1$, $11$, $19$, $29 \pmod{60}$}\\
i & \mbox{if $m\equiv 7$, $13$, $17$, $23 \pmod{60}$}\\
-1 & \mbox{if $m\equiv 31$, $41$, $49$, $59 \pmod{60}$}\\
-i & \mbox{if $m\equiv 37$, $43$, $47$, $53 \pmod{60}$}
\end{cases}
$$

\begin{thm}
\label{thm:chi60}
\beq
\label{eq:chi0altid} 
(q)_\infty (2 - \chi_0(q))
= 
\sum_{\substack{ 3\abs{b} < 5 \abs{a} \\ a\equiv 1\pmod{6}
\\ b\equiv 1,11 \pmod{30}}}
\sgn(b) \leg{12}{a} \chi_{60}(b)
  q^{\frac{1}{120}(5a^2 - b^2) - \frac{1}{30}}
\eeq               
and
\beq                    
\label{eq:chi1altid}
(q)_\infty \chi_1(q)  
= i\,
\sum_{\substack{ 3\abs{b} < 5 \abs{a} \\ a\equiv b\equiv 1\pmod{6}
\\ b\equiv \pm 2 \pmod{5}}}
\sgn(b) \leg{12}{a} \chi_{60}(b)
  q^{\frac{1}{120}(5a^2 - b^2) - \frac{19}{30}}
\eeq                  
\end{thm}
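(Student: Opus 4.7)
The plan is to deduce Theorem~\refthm{chi60} from the already-proved Theorem~\refthm{chi01} by reindexing the four double sums of \eqn{chi01a}--\eqn{chi01b} as character sums in new variables $(a,b)$. For \eqn{chi0altid} I would substitute $a \in \{6j+1,\ -(6j+5)\}$ paired with $b \in \{30m+1,\ 30m+11\}$, and for \eqn{chi1altid} I would substitute $a \in \{6j+1,\ 1-6j\}$ paired with $b \in \{30m+7,\ 30m+13\}$. A direct computation then shows that in each of the resulting eight cases,
\[
\tfrac{1}{120}(5a^2 - b^2) - \tfrac{c}{30}
\]
equals the $q$-exponent of the corresponding term of Theorem~\refthm{chi01}, with $c = 1$ in \eqn{chi0altid} and $c = 19$ in \eqn{chi1altid}. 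For example $a=6j+1$, $b=30m+1$, $c=1$ recovers $j(3j+1)/2 - m(15m+1)/2$, which is the exponent of the first sum in \eqn{chi01a}.

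The binomial factors $(1-q^{2j+1})$ in \eqn{chi01a} and $(1+q^j)$ in \eqn{chi01b} arise from pairing the two values of $a$. Since $(-(6j+5))^2 - (6j+1)^2 = 48j + 24$, the two $a$-values for \eqn{chi0altid} produce $q$-exponents differing by exactly $2j+1$; similarly $(6j+1)^2 - (1-6j)^2 = 24j$ gives the shift $j$ needed for \eqn{chi1altid}. As $j$ runs over $j \ge 0$, the two paired $a$-values together exhaust the residue class $a \equiv 1 \pmod 6$ exactly once. To match signs I would use the easy identities $\sgn(30m+r) = \sgn(m)$ and
\[
\chi_{60}(30m+r) = \begin{cases} (-1)^m, & r \in \{1,11\},\\ i\,(-1)^m, & r \in \{7,13\}, \end{cases}
\]
together with $\leg{12}{6j+1} = \leg{12}{1-6j} = (-1)^j$ and $\leg{12}{-(6j+5)} = (-1)^{j+1}$. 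Consequently, for \eqn{chi0altid} the character coefficient $\sgn(b)\leg{12}{a}\chi_{60}(b)$ equals $\sgn(m)(-1)^{m+j}$ on one member of each pair and its negative on the other, producing the factor $(1-q^{2j+1})$ once the overall sign change from $(q)_\infty(2-\chi_0) = -(q)_\infty(\chi_0-2)$ is applied. For \eqn{chi1altid}, the two paired $a$-values carry the same value of $\leg{12}{\cdot}$, so both terms come with equal sign and produce $(1+q^j)$; the external $i$ in \eqn{chi1altid} times the $i$ inside $\chi_{60}$ gives the $-1$ needed to match the coefficient $\sgn(m)(-1)^{m+j+1}$ of \eqn{chi01b}.

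Finally, a short case analysis on $\sgn(m)$ shows that the inequality $3|b| < 5|a|$ pulls back to the four triangular ranges $-j \le 3m \le j$, $-j-1 \le 3m \le j-1$, $-j \le 3m \le j-1$, and $-j-1 \le 3m \le j-2$ appearing in Theorem~\refthm{chi01}. The main obstacle is the bookkeeping: four paired substitutions, four $q$-exponents after binomial expansion, and several sign conventions must combine consistently. A secondary check is needed at $j=0$ for \eqn{chi1altid}, where the value $a=1$ would force $|b|\le 1$, incompatible with $b \equiv \pm 2 \pmod 5$; so $a=1$ contributes nothing to the character sum, which confirms that starting the $j$-summation at $j=1$ in \eqn{chi01b} is consistent with the reindexing.
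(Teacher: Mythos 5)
Your proposal is correct and follows essentially the same route as the paper: the paper proves \eqn{chi1altid} by exactly your substitution $a=6(\pm j)+1$, $b=30m+r$ with $r=7,13$, checking the exponent identities, the values of $\sgn(b)$, $\leg{12}{a}$, $\chi_{60}(b)$, and the four inequality cases, and then declares the remaining identity analogous. Your explicit pairing $a\in\{6j+1,-(6j+5)\}$, $b\in\{30m+1,30m+11\}$ for \eqn{chi0altid}, including the $j=0$ and sign checks, is precisely that analogous argument, so nothing is missing.
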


We find analogous identities for the seventh order functions.
Also Andrews \cite{An86} obtained indefinite theta series  identities
for these functions. Hickerson \cite[Theorem 2.0,p.666]{Hi88b} found
nice versions of Andrews identities, which he used to prove
his seventh order analogues of Ramanujan's mock theta conjectures \cite{An-Ga89}
for the fifth order functions. Our identities differ from Andrews's and Hickerson's
and appear to be new.
\begin{thm}
\label{thm:mock7alt}
\beq
\label{eq:F0altid}
(q)_\infty \mathcal{F}_0(q) =
\sum_{\substack{ 3\abs{b} < 7 \abs{a} \\ a\equiv 1\pmod{6}
\\ b\equiv 1,13 \pmod{42}}}
\sgn(b) \leg{12}{a} \leg{12}{b}\leg{b}{7}
  q^{\frac{1}{168}(7a^2 - b^2) - \frac{1}{28}},
\eeq
\beq
\label{eq:F1altid}
(q)_\infty \mathcal{F}_1(q) =
-\sum_{\substack{ 3\abs{b} < 7 \abs{a} \\ a\equiv 1\pmod{6}
\\ b\equiv 5,19 \pmod{42}}}
\sgn(b) \leg{12}{a} \leg{12}{b}\leg{b}{7}
  q^{\frac{1}{168}(7a^2 - b^2) + \frac{3}{28}},
\eeq
and
\beq
\label{eq:F2altid}
(q)_\infty \mathcal{F}_2(q) =
-\sum_{\substack{ 3\abs{b} < 7 \abs{a} \\ a\equiv 1\pmod{6}
\\ b\equiv 11,17 \pmod{42}}}
\sgn(b) \leg{12}{a} \leg{12}{b}\leg{b}{7}
  q^{\frac{1}{168}(7a^2 - b^2) - \frac{9}{28}}.
\eeq
\end{thm}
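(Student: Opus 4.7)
\textit{Proof proposal.} The plan is to derive the three character-sum identities \eqn{F0altid}, \eqn{F1altid}, \eqn{F2altid} from the corresponding Hecke-Rogers double sums of Theorem \refthm{mock7} via a linear change of variables combined with a reflection argument. For \eqn{F0altid}, I would start by expanding the factor $(1+q^j)(1-q^{6m+1}) = 1 + q^j - q^{6m+1} - q^{j+6m+1}$ inside \eqn{F0id}, splitting the sum into four monomial pieces, and then in each piece substitute $a = \pm(6j \pm 1)$ and $b = 42m + c$ with $c \in \{1, 13\}$, the signs and offset chosen so that $a \equiv 1 \pmod 6$ and so that $c$ matches the piece.

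A routine computation then shows that in all four pieces the $q$-exponent collapses to the single expression $(7a^2 - b^2)/168 - 1/28$ required by \eqn{F0altid}, and the combined sign $\pm\sgn(m)(-1)^{m+j+1}$ (where the outer $\pm$ comes from the product expansion) agrees with $\sgn(b)\,\leg{12}{a}\,\leg{12}{b}\,\leg{b}{7}$ once the Kronecker symbols are evaluated on the relevant residue classes modulo $12$ and modulo $7$. At this stage the sum is written over a union of four affine sub-regions of $(a,b)$-space; the cone condition $3|b| < 7|a|$ stated in \eqn{F0altid} is not reproduced exactly, because some pieces of the substitution produce $(a, b)$ outside the cone, and, conversely, the theorem's right-hand side contains in-cone pairs (for instance $(a, b) = (13, -29)$, which contributes $-q^2$) that are not obtained directly from any $(j, m)$ in the original summation.

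To bridge this gap I would invoke the automorphism $T \colon (a, b) \mapsto (3b - 8a,\, 21a - 8b)$ of the form $7a^2 - b^2$, coming from the fundamental unit $-8 + 3\sqrt{7}$ (of norm $1$) in $\mathbb{Z}[\sqrt{7}]$. Direct verification shows that $T$ preserves the congruence $a \equiv 1 \pmod 6$, swaps the two residues $\{1, 13\} \pmod{42}$ for $b$, and leaves the character $\sgn(b)\,\leg{12}{a}\,\leg{12}{b}\,\leg{b}{7}$ invariant. The cone $3|b| < 7|a|$ then serves as a fundamental domain for the $T$-action on the lattice points satisfying the congruences, so every $T$-orbit contains a unique in-cone representative; matching each out-of-cone piece of the double sum with its canonical in-cone image under $T^{\pm 1}$ yields \eqn{F0altid}. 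Identities \eqn{F1altid} and \eqn{F2altid} follow by the same scheme applied to \eqn{F1id} and \eqn{F2id}, with the $b$-offsets adjusted so that the surviving residues are $\{5, 19\} \pmod{42}$ (for $\mathcal{F}_1$) or $\{11, 17\} \pmod{42}$ (for $\mathcal{F}_2$) and with the constant shift in the $q$-exponent becoming $+3/28$ and $-9/28$ respectively. \emph{The main obstacle} is the orbit identification of the previous sentence: verifying that $T$ provides a bijection between the natural region of the double sum and the canonical fundamental cone, while preserving the character, requires a careful residue analysis, and the boundary terms (such as the small $(a,b)=(1,1)$ contribution to \eqn{F0altid}, which is reached from the double-sum index $(j,m)=(1,0)$ only after applying $T^{-1}$) must be tracked carefully to avoid either losing or double-counting them.
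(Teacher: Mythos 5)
Your opening moves coincide with the paper's (only sketched) method: rewrite the exponents as $\tfrac{1}{168}\bigl(7(6j\pm1)^2-(42m+c)^2\bigr)+\mathrm{const}$, set $a=6(\pm j)+1$, $b=42m+c$, check that $\sgn(b)\leg{12}{a}\leg{12}{b}\leg{b}{7}$ reproduces the sign $\sgn(m)(-1)^{m+j+1}$, and translate the cone $3\abs{b}<7\abs{a}$ into the summation ranges. You are also right that for \eqn{F0altid} (residues $c=1,13$) the translation fails at the boundary: for instance $(j,m)=(1,0)$ in \eqn{F0id} produces $(a,b)=(-5,13)$, which lies outside the cone, while the in-cone point $(1,1)$ is never produced, and your example $(13,-29)$ is likewise in the cone but unreachable; this defect is real, and the paper's ``the remaining parts are analogous'' glosses over it. But note it occurs \emph{only} for \eqn{F0altid}: for \eqn{F1altid} ($c=5,19$) and \eqn{F2altid} ($c=11,17$) the four-case inequality analysis, exactly as in the paper's sketch for $\chi_1$, gives $-j\le 3m\le j-1$ (resp.\ $-j-1\le 3m\le j-2$) for both choices $a=6j+1$ and $a=-6j+1$, so the direct substitution already matches the cone and no automorphism is needed there.

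The genuine gap is in your bridging lemma for \eqn{F0altid}. The map $T(a,b)=(3b-8a,21a-8b)$ does preserve $7a^2-b^2$, preserves $a\equiv1\pmod 6$ and swaps $b\equiv1\leftrightarrow13\pmod{42}$, but the weight $\sgn(b)\leg{12}{a}\leg{12}{b}\leg{b}{7}$ is \emph{not} $T$-invariant: writing $a=6k+1$, $b=42m+c$, a short computation shows the weight transforms by the factor $\sgn(21a-8b)/\sgn(b)$, i.e.\ it flips whenever $T$ changes the sign of $b$. Concretely, $T(13,-29)=(-191,505)$ carries weight $-1$ to weight $+1$. Hence the weight is not constant on $T$-orbits, and the argument ``the cone is a fundamental domain, so replace each out-of-cone point by its in-cone orbit representative'' does not preserve summands and cannot close the gap as stated (moreover, that the double sum meets each orbit exactly once is itself unproved and is essentially equivalent to the boundary bookkeeping you defer). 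What is actually needed, and is true, is the much more limited matching: the extra out-of-cone points $\bigl(-(6j-1),\pm(14j-1)\bigr)$ arising at $3m=j-1$ (for $c=13$, $m\ge0$) and $3m=-j$ (for $c=1$, $m<0$) correspond, with identical exponent $j(j-1)/3$ and identical weight, to the missed in-cone points $\bigl(6j-5,\pm(14j-13)\bigr)$; this correspondence is one application of your $T^{\pm1}$ in a regime where $\sgn(b)$ is unchanged, and must be verified directly rather than deduced from orbit invariance. Since you flag precisely this step as not carried out, and the invariance claim you would rest it on is false, the proposal as written does not yet prove \eqn{F0altid}.
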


We sketch the proof of \eqn{chi1altid}. Firstly we observe that
\begin{align*}
\tfrac{1}{2}j(3j\pm1) - \tfrac{1}{2}m(15m+7)-1 
&= \tfrac{1}{120}\Parans{5(6j  \pm1)^2 - (30m+7)^2} - \tfrac{19}{30},\\
\tfrac{1}{2}j(3j\pm1) - \tfrac{1}{2}m(15m+13)-2 
&= \tfrac{1}{120}\Parans{5(6j  \pm1)^2 - (30m+13)^2}- \tfrac{19}{30}.
\end{align*}
In the summations in equation \eqn{chi1altid}, we let $a=6(\pm j)+1$,
and $b=30m+r$, where $j\ge1$, $m\in\mathbb{Z}$, and $r=7$, $13$. We have 
\begin{align*}
\leg{12}{a} &= \leg{12}{6(\pm j)+1} = (-1)^j,\\
i\chi_{60}(b) &= i\chi_{60}(30m+r)=(-1)^{m+1},\quad\mbox{and}\\
\sgn(b) &=\sgn(30m+r)=\sgn(m).
\end{align*}
Next we consider the inequalities for the variables in the summations.

\subsubsection*{Case 1} $m\ge0$ and $r=7$. Then we see that
$$
3\abs{b} < 5\abs{a} \Leftrightarrow 3m < j + \Parans{\frac{\pm5-21}{30}}
\Leftrightarrow 3m \le j-1.
$$
\subsubsection*{Case 2} $m<0$ and $r=7$. Then we see that
$$
3\abs{b} < 5\abs{a} \Leftrightarrow -j < 3m + \Parans{\frac{\pm5+21}{30}}
\Leftrightarrow -j \le 3m.
$$
\subsubsection*{Case 3} $m\ge0$ and $r=13$. Then we see that
$$
3\abs{b} < 5\abs{a} \Leftrightarrow 3m < j + \Parans{\frac{\pm5-39}{30}}
\Leftrightarrow 3m \le j-2.
$$
\subsubsection*{Case 4} $m<0$ and $r=13$. Then we see that
$$
3\abs{b} < 5\abs{a} \Leftrightarrow -j + \Parans{\frac{-39\pm5}{30}}< 3m
\Leftrightarrow -j-1 \le 3m.
$$
It follows that
\begin{align*}       
&\sum_{j=1}^\infty 
\sum_{\substack{-j \le 3m \le j-1}}
\sgn(m) (-1)^{m+j+1} q^{ j(3j-1)/2 - m(15m+7)/2 -1}(1+q^{j})\\
&\qquad
=i\,
\sum_{\substack{ 3\abs{b} < 5 \abs{a} \\ a\equiv1\pmod{6}
\\ b\equiv 7 \pmod{30}}}
\sgn(b) \leg{12}{a} \chi_{60}(b)
  q^{\frac{1}{120}(5a^2 - b^2) - \frac{19}{30}},
\end{align*}
and
\begin{align*}
&\sum_{j=1}^\infty 
\sum_{\substack{-j-1 \le 3m \le j-2}}
\sgn(m) (-1)^{m+j+1} q^{ j(3j-1)/2 - m(15m+13)/2 -2}(1+q^{j})\\
&\qquad
=i\,
\sum_{\substack{ 3\abs{b} < 5 \abs{a} \\ a\equiv1 \pmod{6}
\\ b\equiv 13 \pmod{30}}}
\sgn(b) \leg{12}{a} \chi_{60}(b)
  q^{\frac{1}{120}(5a^2 - b^2) - \frac{19}{30}}.
\end{align*}
Therefore we see that equation \eqn{chi01b} implies \eqn{chi1altid}.                    
The proof of the remaining parts of Theorems \refthm{chi60} and \refthm{mock7alt}
are analogous.

Theorems \refthm{chi60} and \refthm{mock7alt} imply simple relations between
the coefficients. We define the coefficients $C_0(n)$ and $C_1(n)$ by
\begin{align*}
\sum_{n=0}^\infty C_0(n) q^n &= (q)_\infty \,(2 - \chi_0(q)),\\
\sum_{n=0}^\infty C_1(n) q^n &= (q)_\infty \,\chi_1(q),   
\end{align*}
define
\beq
\varepsilon_p =
\begin{cases} 
-1 &\mbox{if $p\equiv 3 \pmod{10}$},\\
1 &\mbox{if $p\equiv 7 \pmod{10}$},
\end{cases}
\eeq
and for an integer $n$ and a prime $p$, define $\nu_p(n)$ to be the
exact power of $p$ dividing $n$.
\begin{thm}
\label{thm:chirels}
If $p>5$ is any prime congruent to $3$ or $7$ mod $10$, then
\begin{align}
C_0(n) &= 0  \qquad \mbox{if $\nu_p(30n+1)=1$},
\label{eq:chirel1}\\
C_0(p^2 n + \tfrac{1}{30}(19p^2-1)) &= - \varepsilon_p\, C_1(n)
\qquad \mbox{for $n \ge 0$}, 
\label{eq:chirel2}\\
C_1(n) &= 0  \qquad \mbox{if $\nu_p(30n+19)=1$},
\label{eq:chirel3}\\
C_1(p^2 n + \tfrac{1}{30}(p^2-19)) &=  \varepsilon_p\, C_0(n)
\qquad \mbox{for $n \ge 0$}.
\label{eq:chirel4}
\end{align}
\end{thm}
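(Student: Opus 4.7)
The plan is to derive Theorem \refthm{chirels} directly from the Kronecker-character formulas \eqn{chi0altid}--\eqn{chi1altid}. From those, $C_0(n)$ is a finite sum over integer pairs $(a,b)$ satisfying $5a^2-b^2=4(30n+1)$ together with $a\equiv 1\pmod{6}$, $b\equiv 1,11\pmod{30}$, and $3|b|<5|a|$, while $C_1(n)$ admits an analogous description with $5a^2-b^2=4(30n+19)$, $a\equiv 1\pmod 6$, and $b\equiv 7,13\pmod{30}$.

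First I would prove the vanishing statements \eqn{chirel1} and \eqn{chirel3}. Since $p\equiv 3,7\pmod{10}$ implies $\leg{5}{p}=-1$ by quadratic reciprocity, whenever $p\mid 5a^2-b^2$ and $p\nmid b$ we would get $5\equiv(b/a)^2\pmod p$, making $5$ a quadratic residue, a contradiction. Hence every contributing solution satisfies $p\mid a$ and $p\mid b$, which forces $p^2\mid 5a^2-b^2$. Consequently, if $\nu_p(30n+1)=1$ (respectively $\nu_p(30n+19)=1$), then $p$ exactly divides $4(30n+1)$ (respectively $4(30n+19)$) but no solution contributes, so $C_0(n)=0$ (respectively $C_1(n)=0$).

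For the crossing relation \eqn{chirel2}, set $N=p^2 n+(19p^2-1)/30$, so that $30N+1=p^2(30n+19)$. The equation defining $C_0(N)$ becomes $5a^2-b^2=4p^2(30n+19)$, and by the same QR argument every contributing pair satisfies $p\mid a$, $p\mid b$. Writing $a=pa'$, $b=pb'$ reduces this to $5a'^2-b'^2=4(30n+19)$, the very equation for $C_1(n)$. The Kronecker characters factor as $\leg{12}{pa'}=\leg{12}{p}\leg{12}{a'}$ and $\chi_{60}(pb')=\chi_{60}(p)\chi_{60}(b')$, while $\sgn(pb')=\sgn(b')$. The residue conditions $b\equiv 1,11\pmod{30}$ translate to $b'\equiv p^{-1},11 p^{-1}\pmod{30}$, and after applying the reflection $b'\mapsto -b'$ where necessary (which preserves the summand, because $\chi_{60}$ is odd, $\leg{12}{\cdot}$ is even, and $\sgn$ is odd, so the two sign changes cancel) these collapse exactly onto the $C_1$-classes $b'\equiv 7,13\pmod{30}$. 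Similarly, flipping $a'\mapsto-a'$ when $p\equiv 5\pmod 6$ forces $a'\equiv 1\pmod 6$ while leaving the summand unchanged. The net multiplicative scalar is $\leg{12}{p}\chi_{60}(p)/i$, where the $1/i$ compensates for the $i$ prefactor in \eqn{chi1altid}; a finite check on the residues of $p\pmod{60}$ shows this scalar equals $-\varepsilon_p$. The identity \eqn{chirel4} follows by the same argument with $N=p^2 n+(p^2-19)/30$, for which $30N+19=p^2(30n+1)$.

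The main obstacle is the last step: tracking the factor $\leg{12}{p}\chi_{60}(p)/i$ together with any $\pm 1$ introduced by the reflections $b'\mapsto -b'$ or $a'\mapsto -a'$, and confirming that the accumulated constant is exactly $-\varepsilon_p$ in \eqn{chirel2} and $+\varepsilon_p$ in \eqn{chirel4}. This amounts to a finite case analysis over the reduced residues of $p\pmod{60}$ satisfying $p\equiv 3,7\pmod{10}$, but care is required because $\chi_{60}$ takes values in $\{\pm 1,\pm i\}$ and one must verify that every reflection introduced for congruence reasons contributes a compensating sign.
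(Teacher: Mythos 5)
Your proposal is correct and follows essentially the same route as the paper: the quadratic-nonresidue argument ($\leg{5}{p}=-1$) forces $p\mid a$ and $p\mid b$, giving the vanishing statements, and the substitution $a=pa'$, $b=pb'$ with multiplicativity of the characters, invariance of the summand under $a\mapsto -a$ and $b\mapsto -b$, and a finite residue check identifying $-i\leg{12}{p}\chi_{60}(p)=-\varepsilon_p$ yields \eqn{chirel2} and \eqn{chirel4}, exactly as in the paper (whose table mod $30$ is your list of classes $p^{-1},11p^{-1}$ up to sign). The remaining finite case analysis you flag does check out as claimed.
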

\begin{proof}
Suppose $p>5$ is any prime congruent to  $3$ or $7$ mod $10$. Then 
$5$ is a quadratic nonresidue mod $p$. 
Therefore $5a^2-b^2\equiv0\pmod{p}$ implies that $a\equiv b\equiv0\pmod{p}$
and \eqn{chirel1} clearly follows from \eqn{chi0altid}.
Similarly \eqn{chirel3} follows from \eqn{chi1altid}.

We suppose $a\equiv 1\pmod{6}$, $b\equiv1$, $11\pmod{30}$, $3\abs{b}< 5\abs{a}$,
and $a\equiv b\equiv0\pmod{p}$. Letting $a=pa'$, $b=pb'$ we have the following
table
\begin{center}
\begin{tabular}{c c c}
$p\pmod{30}$ & $a'\pmod{6}$ & $b'\pmod{30}$\\
$7$          & $1$          & $13,-7$ \\
$13$          & $1$          & $7,-13$ \\
$17$          & $-1$          & $-7,13$ \\
$23$          & $-1$          & $-13,7$ 
\end{tabular}
\end{center}
By considering the table and noting that the
summation term 
$$
\sgn(b) \leg{12}{a} \chi_{60}(b)
  q^{\frac{1}{120}(5a^2 - b^2) - \frac{1}{30}}
$$
is invariant under both $a\mapsto -a$ and $b\mapsto -b$ we see that
\begin{align*}
&\sum_{n=0}^\infty 
C_0(p^2 n + \tfrac{1}{30}(19p^2-1))\,
q^{p^2 n + \tfrac{1}{30}(19p^2-1)}\\
&\qquad=  
\sum_{\substack{ 3\abs{b'} < 5 \abs{a'} \\ a'\equiv 1\pmod{6}
\\ b'\equiv 7,13 \pmod{30}}}
\sgn(pb') \leg{12}{pa'} \chi_{60}(pb')
  q^{\frac{1}{120}(p^2(5(a')^2 - (b')^2) - \frac{1}{30}}
\\
&
\qquad=
\leg{12}{p} \chi_{60}(p) \sum_{\substack{ 3\abs{b} < 5 \abs{a} \\ a\equiv 1\pmod{6}
\\ b\equiv 7,13 \pmod{30}}}
\sgn(b) \leg{12}{a} \chi_{60}(b)
  q^{\frac{1}{120}(p^2(5a^2 - b^2)) - \frac{1}{30}}
\end{align*}

and

\begin{align*}
&\sum_{n=0}^\infty 
C_0(p^2 n + \tfrac{1}{30}(19p^2-1))\,
q^{n}\\
&\qquad=  
-i\,\varepsilon_p\,\sum_{\substack{ 3\abs{b} < 5 \abs{a} \\ a\equiv 1\pmod{6}
\\ b\equiv 7,13 \pmod{30}}}
\sgn(b) \leg{12}{a} \chi_{60}(b)
  q^{\frac{1}{120}((5a^2 - b^2) - \frac{19}{30}}\\
&\qquad = -\varepsilon_p  (q)_\infty \,\chi_1(q)   
 = -\varepsilon_p \sum_{n=0}^\infty C_1(n) q^n,   
\end{align*}
and \eqn{chirel2} follows.
The proof of \eqn{chirel3}--\eqn{chirel4} is analogous.
\end{proof}

In a similar fashion, Theorem \refthm{mock7alt} implies relations between the
coefficients of the seventh order mock theta functions. For $j=0$, $1$, $2$
we define $f_j(n)$ by
$$                    
\sum_{n=0}^\infty f_j(n) q^n = (q)_\infty \,\mathcal{F}_j(q).
$$
\begin{thm}
\label{thm:mock7rels}
Let $p$ be any odd for which $7$ is 
a quadratic nonresidue mod $p$;i.e.\ $p\equiv\pm5, \pm 11$ or $\pm 13\pmod{28}$.
\begin{enumerate}
\item
Then
\begin{align*}
f_0(n) &= 0  \qquad \mbox{if $\nu_p(28n+1)=1$},\\
f_1(n) &= 0  \qquad \mbox{if $\nu_p(28n-3)=1$},\\
f_2(n) &= 0  \qquad \mbox{if $\nu_p(28n+9)=1$}.
\end{align*}
\item If $p\equiv \pm 5 \pmod{28}$ then
\begin{align*}
f_0(p^2n + \tfrac{1}{28}(9p^2-1)) &= \pm f_2(n),\\
f_1(p^2n + \tfrac{1}{28}(p^2+3)) &= \pm f_0(n),\\
f_2(p^2n + \tfrac{1}{28}(25p^2-9)) &= \mp f_1(n+1).
\end{align*}
\item If $p\equiv \pm 11 \pmod{28}$ then
\begin{align*}
f_0(p^2n + \tfrac{1}{28}(25p^2-1)) &= \mp f_1(n+1),\\
f_1(p^2n + \tfrac{1}{28}(9p^2+3)) &= \pm f_2(n),\\
f_2(p^2n + \tfrac{1}{28}(p^2-9)) &= \mp f_0(n).
\end{align*}
\item If $p\equiv \pm 13 \pmod{28}$ then
\begin{align*}
f_0(p^2n + \tfrac{1}{28}(p^2-1)) &= \mp f_0(n),\\
f_1(p^2n + \tfrac{1}{28}(25p^2+3)) &= \mp f_1(n+1),\\
f_2(p^2n + \tfrac{1}{28}(9p^2-9)) &= \mp f_2(n).
\end{align*}
\end{enumerate}
	\end{thm}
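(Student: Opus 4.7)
The plan is to mirror the proof of Theorem~\refthm{chirels}, using the character-theoretic identities \eqn{F0altid}--\eqn{F2altid} of Theorem~\refthm{mock7alt} in place of \eqn{chi0altid}--\eqn{chi1altid}. Write $\sigma_0=+1$ and $\sigma_1=\sigma_2=-1$ for the overall signs in those three identities, and $(e_0,e_1,e_2)=(-\tfrac{1}{28},\tfrac{3}{28},-\tfrac{9}{28})$ so that the exponent in each sum for $(q)_\infty\mathcal{F}_j(q)$ takes the shape $(7a^2-b^2)/168+e_j$. Part~(1) then goes through exactly as in the fifth order case: since $7$ is a quadratic nonresidue modulo $p$, the congruence $7a^2\equiv b^2\pmod{p}$ forces $p\mid a$ and $p\mid b$, hence $p^2\mid 7a^2-b^2$; translating to the exponent, the hypothesis $\nu_p(28n+c'_j)=1$ (with $c'_j=1,-3,9$ for $j=0,1,2$) is therefore incompatible with any contributing pair $(a,b)$, so $f_j(n)=0$.

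For parts~(2)--(4) I would restrict the sum on the right side of \eqn{F0altid}--\eqn{F2altid} to $(a,b)$ with $p\mid a$ and $p\mid b$, and substitute $a=pa'$, $b=pb'$. Multiplicativity of the Kronecker symbols together with $\leg{12}{p}^2=1$ replaces the character factor $\leg{12}{pa'}\leg{12}{pb'}\leg{pb'}{7}$ by $\leg{p}{7}\leg{12}{a'}\leg{12}{b'}\leg{b'}{7}$; the inequality $3|pb'|<7|pa'|$ reduces to $3|b'|<7|a'|$; and the congruences become $a'\equiv p^{-1}\pmod{6}$ and $b'\equiv p^{-1}r\pmod{42}$. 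Since $\leg{12}{-1}=1$ and $\sgn(-b)\leg{-b}{7}=\sgn(b)\leg{b}{7}$, the summand is invariant under $a\mapsto-a$ and under $b\mapsto-b$, so I can renormalize to have $a'\equiv1\pmod{6}$ and $b'$ in one of the three canonical classes $\{1,13\}$, $\{5,19\}$, $\{11,17\}\pmod{42}$. A direct computation of $p^{-1}\pmod{42}$ acting on these three class pairs shows that $p\equiv\pm5\pmod{28}$ induces the $3$-cycle $F_0\to F_2\to F_1\to F_0$, $p\equiv\pm11\pmod{28}$ induces $F_0\to F_1\to F_2\to F_0$, and $p\equiv\pm13\pmod{28}$ fixes each $F_j$.

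Combining the factor $\leg{p}{7}$ (which equals $-1$ precisely for $p\equiv5,13,-11\pmod{28}$) with the ratio $\sigma_j\sigma_{j'}^{-1}$ produces the $\pm/\mp$ signs in the theorem. Matching exponents via $p^2E_{j'}(a',b')+c_{j,j'}=E_j(pa',pb')$ gives $c_{j,j'}=e_j-p^2e_{j'}$; this quantity is negative exactly for the three maps $F_j\to F_1$, so in those cases I absorb a factor of $p^2$ by shifting the $F_1$ argument from $n$ to $n+1$, which produces the constants $(25p^2-1)/28$, $(25p^2+3)/28$, $(25p^2-9)/28$ and the $f_1(n+1)$ on the right hand side of the corresponding relations. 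The main obstacle is the case-by-case bookkeeping across six residue classes of $p\pmod{28}$ and the nine source-target pairs $(j,j')$: once the permutation, the sign $\leg{p}{7}\sigma_j\sigma_{j'}^{-1}$, and the constant $c_{j,j'}$ (possibly shifted by $p^2$) are tabulated in each case, formal coefficient comparison concludes the proof exactly as in Theorem~\refthm{chirels}.
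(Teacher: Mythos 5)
Your proposal is correct and follows essentially the route the paper intends: it mirrors the proof of Theorem \refthm{chirels}, using the identities \eqn{F0altid}--\eqn{F2altid}, the quadratic nonresidue condition to force $p\mid a$, $p\mid b$, the substitution $a=pa'$, $b=pb'$ with the sign-flip symmetry of the summand, and case-by-case tracking of residue classes and the factor $\leg{p}{7}$ (the paper omits this proof, stating it is analogous to Theorem \refthm{chirels}). Your permutation of the families, the sign bookkeeping via $\leg{p}{7}\sigma_j\sigma_{j'}$, and the exponent constants $e_j-p^2e_{j'}$ (with the shift $n\mapsto n+1$ exactly when the target is $\mathcal{F}_1$) all check out against the stated relations.
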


We omit the proof of Theorem \refthm{mock7rels}. The proof is analogous to that
of Theorem \refthm{chirels}.



\section{Concluding Remarks}
\label{sec:conclude}

In Theorems \refthm{chi60} and \refthm{mock7alt} we found new identities
for the fifth order mock theta functions $\chi_0(q)$, $\chi_1(q)$
and all three  seventh order mock theta functions 
$\mathcal{F}_0(q)$, $\mathcal{F}_1(q)$, $\mathcal{F}_2(q)$, 
in terms of Hecke-Rogers indefinite binary theta series.               
This suggests the problem of relating these theorems directly to the results
of Zagier (Theorem \refthm{zagchithm}) for the fifth order functions, 
and to the results of Andrews \cite[Theorem 13, pp.132--133]{An86}
and Hickerson \cite[Theorem 2.0,p.666]{Hi88b} for the seventh order
functions.

\subsection*{Acknowledgments}
I would like to thank Chris Jennings-Shaffer and Jeremy Lovejoy
for their  comments and suggestions. Also I would like to thank the
referee for corrections and suggestions.


\begin{thebibliography}{10}

\bibitem{An86}
George~E. Andrews, \emph{The fifth and seventh order mock theta functions},
  Trans. Amer. Math. Soc. \textbf{293} (1986), no.~1, 113--134. 

\bibitem{An-book}
George~E. Andrews, \emph{The theory of partitions}, Cambridge Mathematical Library,
  Cambridge University Press, Cambridge, 1998, Reprint of the 1976 original.
  

\bibitem{An-Ga89}
George~E. Andrews and F.~G. Garvan, \emph{Ramanujan's ``lost'' notebook. {VI}.
  {T}he mock theta conjectures}, Adv. in Math. \textbf{73} (1989), no.~2,
  242--255. 

\bibitem{Ba48}
W.~N. Bailey, \emph{Identities of the {R}ogers-{R}amanujan type}, Proc. London
  Math. Soc. (2) \textbf{50} (1948), 1--10. 

\bibitem{Ga-Ra-book}
George Gasper and Mizan Rahman, \emph{Basic hypergeometric series}, second ed.,
  Encyclopedia of Mathematics and its Applications, vol.~96, Cambridge
  University Press, Cambridge, 2004, With a foreword by Richard Askey.
  

\bibitem{Hi88b}
Dean Hickerson, \emph{On the seventh order mock theta functions}, Invent. Math.
  \textbf{94} (1988), no.~3, 661--677. 

\bibitem{Lo12}
Jeremy Lovejoy, \emph{Ramanujan-type partial theta identities and conjugate
  {B}ailey pairs}, Ramanujan J. \textbf{29} (2012), no.~1-3, 51--67.
  

\bibitem{Sl51}
L.~J. Slater, \emph{A new proof of {R}ogers's transformations of infinite
  series}, Proc. London Math. Soc. (2) \textbf{53} (1951), 460--475.
  

\bibitem{Su71}
M.~V. Subbarao, \emph{Combinatorial proofs of some identities}, Proceedings of
  the {W}ashington {S}tate {U}niversity {C}onference on {N}umber {T}heory
  ({W}ashington {S}tate {U}niv., {P}ullman, {W}ash., 1971), Dept. Math.,
  Washington State Univ., Pullman, Wash., 1971, pp.~80--91. 

\bibitem{Wa03}
S.~Ole Warnaar, \emph{Partial theta functions. {I}. {B}eyond the lost
  notebook}, Proc. London Math. Soc. (3) \textbf{87} (2003), no.~2, 363--395.

\bibitem{Wa36b}
G.~N. Watson, \emph{The {M}ock {T}heta {F}unctions (2)}, Proc. London Math.
  Soc. (2) \textbf{42} (1936), no.~4, 274--304. 

\bibitem{Zag06}
Don Zagier, \emph{Ramanujan's mock theta functions and their applications
  (after {Z}wegers and {O}no-{B}ringmann)}, Ast\'erisque (2009), no.~326, Exp.
  No. 986, vii--viii, 143--164 (2010), S\'eminaire Bourbaki. Vol. 2007/2008.
  

\bibitem{Zw09}
Sander Zwegers, \emph{On two fifth order mock theta functions}, Ramanujan J.
  \textbf{20} (2009), no.~2, 207--214. 

\bibitem{Zw-thesis}
S.P. Zwegers, \emph{Mock theta functions}, Ph.D. thesis, Universiteit Utrecht,
  2002, p.~96.

\end{thebibliography}
\end{document}